\documentclass[11pt]{article}

\usepackage[utf8]{inputenc}
\usepackage{amsmath}
\usepackage{amssymb}
\usepackage{amsthm}
\usepackage{bm}
\usepackage{xcolor}
\definecolor{slate}{RGB}{71,85,105}
\definecolor{amber}{RGB}{217,119,6}
\usepackage{hyperref}
\usepackage{tikz}
\usetikzlibrary{shapes,arrows,positioning,calc}

\theoremstyle{definition}
\newtheorem{definition}{Definition}[section]

\newtheorem{remark}[definition]{Remark}

\theoremstyle{plain}
\newtheorem{theorem}[definition]{Theorem}
\newtheorem{lemma}[definition]{Lemma}
\newtheorem{corollary}[definition]{Corollary}
\newtheorem{proposition}[definition]{Proposition}

\title{\textbf{The Octonionic Rational Wave Numbers: Non-Associative Sequence Algebras, Moufang Loops, and Exceptional $G_2$ Symmetries}}
\author{\textbf{Terence R. Smith}\thanks{Email: \texttt{TerenceRSmith55@gmail.com}}}
\date{\today}

\begin{document}

\maketitle

\begin{abstract}
We generalize discrete rational wave numbers (arXiv:2609.23233) to the octonionic rational wave numbers $\mathbb{O}^\mathbb{Z}$ over Cayley's octonions 
$\mathbb{O}$. Rather than assuming higher imaginary units, all rational wave number spaces $\mathbb{C}^\mathbb{Z}, \mathbb{H}^\mathbb{Z}, \mathbb{O}^\mathbb{Z}$ are derived constructively from two scalar primitives: 
the coordinate advance $E_Z$ and phase calibration $E_{\bar{1}}$. 
Their closure under pointwise multiplication, inversion, and rational roots generates an infinite divisible Moufang loop on the sequence ring $\mathcal{S}_\mathbb{O} := \mathbb{O}^\mathbb{Z}$, terminating strictly at $\mathbb{O}^\mathbb{Z}$ via sedenionic zero-divisor excision. By Artin's Theorem, pairwise compositions remain strictly associative, whereas three-wave interactions generate an intrinsic non-associative curvature quantified by the associator $[u, v, w] = (uv)w - u(vw)$. We define a real-valued sequence inner product that bypasses the Jordan--von Neumann--Wigner obstruction, establishing the orthonormality of the seven Fano coordinate primitives. We establish three principal theorems: (1) The Invariant Hexagonal Mesh Theorem, proving that cumulative integration exhibits a scale-invariant bifurcation at $n=6$ governed by the root system of $G_2 = \operatorname{Aut}(\mathbb{O})$ and its $\mathrm{SU}(3)$ stabilizer subgroup; (2) The Algebraic Phase Jitter Theorem, showing that Catalan parenthesization trees across non-Fano triads produce an irreducible zero-point phase fluctuation, yielding an algebraic origin for vacuum jitter; and (3) The Rational Loop Closure Theorem, proving that rational frequencies select closed finite Moufang sub-loops, while irrational frequencies yield topologically dense, non-closing trajectories. This provides an autonomous, discrete algebraic foundation for non-associative geometry and fundamental physical symmetries.
\end{abstract}

\noindent\textbf{Keywords:} Octonions, Non-associative rings, Rational wave numbers, Moufang loops, Exceptional Lie group $G_2$, Fano plane, Catalan trees, Division algebras, Invariant lattices. \\
\textbf{MSC 2020:} 17D05, 17A35, 20N05, 11B83, 22E60, 81R05.

\vspace{1em}

\section{Introduction}\label{sec:intro}

We extend the algebraic classification of discrete rational wave numbers on a one-dimensional spatial lattice \(\mathbb{Z}\)—established for complex and quaternionic lattice fields \(\mathbb{C}^{\mathbb{Z}}\) and \(\mathbb{H}^{\mathbb{Z}}\) in \cite{smith2025closure} and \cite{smith2026rational}—to the non-associative octonionic field space \(\mathbb{O}^{\mathbb{Z}}\). These three spaces comprise lattice sequences taking values in the norm division algebras \(\mathbb{C}\), \(\mathbb{H}\), and \(\mathbb{O}\), respectively.

Starting from a minimal set of two coordinate primitives:
\begin{equation}
E_Z[m] = \exp(2\pi i m) \equiv 1, \qquad E_{\bar{1}}[m] = \exp(2\pi i) \equiv 1, \quad \forall m \in \mathbb{Z},
\end{equation}
that are lifted into an unwrapped affine phase space $\mathbb{R}^\mathbb{Z} / \mathbb{Z}^\mathbb{Z}$, the linear rational wave closure space $\mathcal{G}$ under pointwise multiplication $(\otimes)$, multi-branch rational roots $(\cdot)^{1/n}$, and inversion $(\cdot)^{-1}$ was classified by Smith (Theorem~3.3 of \cite{smith2025closure}) as an infinite divisible abelian torsion group:
\begin{equation}
\mathcal{G} \cong (\mathbb{Q}/\mathbb{Z}) \times (\mathbb{Q}/\mathbb{Z}) = \bigoplus_{p \text{ prime}} \left( \mathbb{Z}(p^\infty) \times \mathbb{Z}(p^\infty) \right).
\end{equation}
Further results established in~\cite{smith2025closure,smith2026rational} include:
\begin{enumerate}
    \item Adjoining coordinate powers $m^k$ generates the polynomial phase group $\mathcal{G}_{\mathrm{poly}}$ \cite{smith2026rational}, classified completely by the ring of integer-valued polynomials $\operatorname{Int}(\mathbb{Z})$~\cite{cahen1997}.
    \item Adjoining sequence addition yields the complex group algebra $\mathbb{C}[\mathcal{G}_{\mathrm{poly}}]$, within which the \emph{Permutation-Symmetric Phasor Superposition Theorem} (\cite{smith2025closure}, Theorem~4.6) proves that any finite sum of unimodular wave sequences factors into an average phase carrier multiplied by a recursively generated, permutation-symmetric amplitude envelope.
    \item Number-theoretic sieves $\Pi_{f_0}$ constructed from finite sums of roots of unity define idempotent projection operators, synthesizing localized Kronecker delta sequences $\delta_{m, 0}$ from delocalized Fourier exponential sequences without invoking continuous distributions (\emph{Sieve Projector and Not-Number Theorem} \cite{smith2025closure}, Theorem~4.19).
    \item Introducing the discrete cumulative integral operator $\mathcal{I}$ on sequence space establishes closed regular $n$-gon trajectories in the complex plane $\mathbb{C}$ with circumradius $R_n = [2\sin(\pi/n)]^{-1}$, revealing a universal scale-invariant bifurcation strictly at $n=6$ that generates a stable, scale-invariant hexagonal lattice tiling (\emph{Invariant Hexagonal Mesh Theorem} \cite{smith2025closure}, Theorem~5.4).
    \item Formulating discrete wave calculus via the backward difference operator $\mathcal{D}$ proves the fundamental theorem $(\mathcal{I} \circ \mathcal{D})u = u - u[0]$ and establishes the cohomological characterization of zero-sum periodic cycles $\operatorname{Im}(\mathcal{D}) = \ker(\sum) = \mathcal{Z}_n$ (\emph{Discrete Wave Calculus and Zero-Sum Cohomology Theorem} \cite{smith2025closure}, Theorem~5.7).
    \item Generalizing the sequence inner product to division algebras defines a positive-definite pre-Hilbert space that reduces on periodic rational sequences to the finite arithmetic average over the principal period (\emph{Spatial Sequence Inner Product Theorem} \cite{smith2025closure}, Theorem~5.27).
    \item Lattice translation selects rational frequencies $f \in \mathbb{Q}/\mathbb{Z}$ as finite, closed periodic orbits, whereas irrational frequencies generate topologically dense, ergodic trajectories on the circle $S^1$ (\emph{Topological Rational Selection Theorem} \cite{smith2025closure}, Theorem~5.28).
    \item Generalizing the field of values from the complex numbers $\mathbb{C}$ to Hamilton's quaternions $\mathbb{H}$ (extending sequence space from $\mathbb{C}^\mathbb{Z}$ to $\mathbb{H}^\mathbb{Z}$) accommodates non-abelian $\mathrm{SU}(2)$ and $\mathrm{SO}(3)$ rotations, expanding the 6 planar hexagonal roots into the 24 unit vertices of the self-dual 24-cell regular 4-polytope. Furthermore, the biquaternionic norm determinant $\det(X) = x_0^2 - x_1^2 - x_2^2 - x_3^2$ generates the canonical pseudo-Euclidean quadratic form of signature $(1,3)$ and automorphism group $\mathrm{SO}^+(1,3)$ acting on the 3-dimensional imaginary subspace $\operatorname{Vec}(\mathbb{H})$.
\end{enumerate}

It is important to note that while \cite{smith2026rational} established the abelian classification of wave numbers  $\mathbb{C}^\mathbb{Z}$ over $\mathbb{C}$ and outlined the results of quaternionic wave extensions $\mathbb{H}^\mathbb{Z}$, it did not discuss whether non-commuting and non-associative degrees of freedom can be derived operationally from the two canonical scalar primitives $\{E_Z, E_{\bar{1}}\}$ under fundamental sequence operations. In Section~\ref{subsec:cayley_dickson_genesis} of the paper, we resolve this foundational question affirmatively, demonstrating that the full rational wave number ladder $\mathbb{R}^\mathbb{Z} \to \mathbb{C}^\mathbb{Z} \to \mathbb{H}^\mathbb{Z} \to \mathbb{O}^\mathbb{Z}$ emerges inevitably from the operational interplay of phase calibration, spatial displacement involution, and causal network branching.

\section{The Octonion Algebra $\mathbb{O}$ and the Cayley--Dickson Construction}\label{sec:octonions_algebra}

For the sake of completeness and to facilitate the reading of 
Section \ref{sec:discrete_sequence_spaces}
we now summarize the basic algebraic properties of Cayley's octonions $\mathbb{O}$. We first formulate the canonical basis and the Fano plane multiplication rules, examine the unit 7-sphere $S^7$ and its polar representation, detail the inductive Cayley--Dickson doubling algorithm $(\mathbb{R} \to \mathbb{C} \to \mathbb{H} \to \mathbb{O})$, examine the onset of zero divisors in the 16-dimensional sedenions $\mathbb{S}$ leading to Hurwitz's termination theorem, and review the weak associative properties (alternativity, Artin's theorem, and the Moufang loop identities) that govern sub-algebraic compositions.

\subsection{Definition, Canonical Basis, and the Fano Plane Multiplication Table}
The real octonion algebra $\mathbb{O}$ is an 8-dimensional non-associative, non-commutative normed division algebra over $\mathbb{R}$ with canonical basis:
\begin{equation}
\{1, e_1, e_2, e_3, e_4, e_5, e_6, e_7\},
\end{equation}
where $1$ is the multiplicative identity and $e_a$ ($a \in \{1,\dots,7\}$) are anti-commuting imaginary units satisfying:
\begin{equation}
e_a^2 = -1, \qquad e_a e_b = -e_b e_a = \sum_{c=1}^7 \epsilon_{abc} \, e_c \quad (a \neq b),
\end{equation}
in which $\epsilon_{abc}$ is a completely antisymmetric tensor of rank 3 with values in $\{0, \pm 1\}$. The multiplication of the 7 imaginary basis units is geometrically encoded by the 7 points and 7 oriented lines of the Fano plane (the projective plane over the finite field of two elements, $\mathbb{PG}(2, \mathbb{F}_2)$), as depicted in Figure~\ref{fig:fano}.

\begin{figure}[htbp]
\centering
\begin{tikzpicture}[scale=1.2]
  \coordinate (e1) at (0, 2.6);
  \coordinate (e2) at (-2.25, -1.3);
  \coordinate (e3) at (2.25, -1.3);
  \coordinate (e4) at ($(e1)!0.5!(e2)$);
  \coordinate (e5) at ($(e2)!0.5!(e3)$);
  \coordinate (e6) at ($(e3)!0.5!(e1)$);
  \coordinate (e7) at (0, 0);
  \draw[thick, blue!70!black] (e7) circle (1.3cm);
  \draw[thick, slate!60!black] (e1) -- (e2) -- (e3) -- cycle;
  \draw[thick, slate!60!black] (e1) -- (e5);
  \draw[thick, slate!60!black] (e2) -- (e6);
  \draw[thick, slate!60!black] (e3) -- (e4);
  \filldraw[fill=red!20, draw=red!80!black, thick] (e1) circle (0.28cm) node {\small $e_1$};
  \filldraw[fill=red!20, draw=red!80!black, thick] (e2) circle (0.28cm) node {\small $e_2$};
  \filldraw[fill=red!20, draw=red!80!black, thick] (e3) circle (0.28cm) node {\small $e_3$};
  \filldraw[fill=green!20, draw=green!70!black, thick] (e4) circle (0.28cm) node {\small $e_4$};
  \filldraw[fill=green!20, draw=green!70!black, thick] (e5) circle (0.28cm) node {\small $e_5$};
  \filldraw[fill=green!20, draw=green!70!black, thick] (e6) circle (0.28cm) node {\small $e_6$};
  \filldraw[fill=amber!20, draw=amber!80!black, thick] (e7) circle (0.28cm) node {\small $e_7$};
\end{tikzpicture}
\caption{The Fano plane $\mathbb{PG}(2,\mathbb{F}_2)$ encoding octonionic multiplication. The 7 oriented lines (the 3 sides, the 3 altitudes, and the inscribed central circle) specify the 7 associative quaternionic triads $(e_a, e_b, e_c)$ with $e_a e_b = e_c$.}\label{fig:fano}
\end{figure}

The seven oriented lines of the Fano plane define seven cyclic quaternionic triads:
\begin{equation}
\mathcal{T} = \big\{ (e_1, e_2, e_4), (e_2, e_3, e_5), (e_3, e_4, e_6), (e_4, e_5, e_7), (e_5, e_6, e_1), (e_6, e_7, e_2), (e_7, e_1, e_3) \big\}.\label{eq:fano_triads}
\end{equation}
For any directed triad $(e_a, e_b, e_c) \in \mathcal{T}$, multiplication follows the cyclic quaternion relations:
\begin{equation}
e_a e_b = e_c = -e_b e_a, \quad e_b e_c = e_a = -e_c e_b, \quad e_c e_a = e_b = -e_a e_c.
\end{equation}
Any octonion $x \in \mathbb{O}$ is uniquely represented as a linear combination of the identity and the seven imaginary units:
\begin{equation}
x = x_0 + \sum_{a=1}^7 x_a e_a = \operatorname{Sc}(x) + \operatorname{Vec}(x), \quad x_0, x_a \in \mathbb{R},
\end{equation}
where $\operatorname{Sc}(x) := x_0$ is the real scalar part and $\operatorname{Vec}(x) := \sum_{a=1}^7 x_a e_a$ is the pure imaginary 7-vector part. Octonionic conjugation is defined by reversing the sign of the imaginary components:
\begin{equation}
x^* := x_0 - \sum_{a=1}^7 x_a e_a = \operatorname{Sc}(x) - \operatorname{Vec}(x).
\end{equation}
The quadratic Euclidean norm is given by:
\begin{equation}
|x|^2 := x x^* = x^* x = \sum_{\mu=0}^7 x_\mu^2 \ge 0,
\end{equation}
which satisfies the composition property (strict norm multiplicativity) across all octonions:
\begin{equation}
|x y| = |x| \, |y|, \quad \forall x, y \in \mathbb{O}.
\end{equation}
Every non-zero octonion $x \neq 0$ has a unique two-sided multiplicative inverse:
\begin{equation}
x^{-1} = \frac{x^*}{|x|^2}, \qquad x x^{-1} = x^{-1} x = 1.
\end{equation}

\subsection{The Unit 7-Sphere $S^7$, Moufang Loop Geometry, and Polar Decomposition}\label{subsec:unit_7sphere}
The unimodular unit octonions form the compact 7-sphere:
\begin{equation}
S^7 = \{ x \in \mathbb{O} \mid |x| = 1 \}.
\end{equation}
Unlike $S^1 = \mathrm{U}(1)$ (an abelian Lie group) and $S^3 = \mathrm{SU}(2)$ (a non-abelian Lie group), $S^7$ is not a Lie group because it lacks associativity. Instead, $S^7$ is a smooth, analytic Moufang loop~\cite{paiva2003}.

Any unit octonion $u \in S^7$ admits a polar Euler-type exponential representation:
\begin{equation}
u = \exp(\hat{\mathbf{n}} \, \theta) = \cos(\theta) + \hat{\mathbf{n}} \sin(\theta),
\end{equation}
where $\theta \in [0, \pi]$ is the phase angle and $\hat{\mathbf{n}} = \sum_{a=1}^7 n_a e_a \in S^6$ is a unit pure imaginary octonion ($\sum_{a=1}^7 n_a^2 = 1$), which satisfies $\hat{\mathbf{n}}^2 = -1$. Because the single direction $\hat{\mathbf{n}}$ together with $1$ generates an associative complex subfield $\operatorname{span}\{1, \hat{\mathbf{n}}\} \cong \mathbb{C}$ by Artin's Theorem (Section~\ref{subsec:alternativity_moufang}), single-generator exponential maps and phase rotations are completely unambiguous.

\subsection{The Cayley--Dickson Doubling Procedure: $\mathbb{R} \to \mathbb{C} \to \mathbb{H} \to \mathbb{O}$}\label{subsec:cayley_dickson_procedure}
The octonions do not stand in isolation; they are the third iteration of the inductive Cayley--Dickson doubling construction applied to the real numbers $\mathbb{R}$~\cite{baez2002}. 

Let $A$ be a real algebra equipped with an anti-automorphic involution $* : A \to A$ satisfying $(a^*)^* = a$, $(a + b)^* = a^* + b^*$, and $(a b)^* = b^* a^*$. The doubled algebra $A' = A \oplus A \ell$ (where $\ell$ is an imaginary unit satisfying $\ell^2 = -1$) is defined by endowing pairs $(a, b) \in A \times A$ with the multiplication law:
\begin{equation}\label{eq:cayley_dickson_multiplication}
(a + b\ell)(c + d\ell) = (ac - d^* b) + (da + b c^*)\ell, \quad \forall a, b, c, d \in A.
\end{equation}
The involution on the doubled algebra $A'$ is defined by:
\begin{equation}
(a + b\ell)^* := a^* - b\ell.
\end{equation}
Successive applications of this doubling rule construct the canonical tower of real division algebras, where each step sacrifices a foundational algebraic property:
\begin{enumerate}
    \item \textbf{From $\mathbb{R}$ to $\mathbb{C}$ (Loss of Poset Trichotomy / Ordering):}
    Setting $A = \mathbb{R}$ with trivial involution $r^* = r$, doubling yields $\mathbb{C} = \mathbb{R} \oplus \mathbb{R} i$. Multiplication in $\mathbb{C}$ is both commutative and associative, but total algebraic ordering is lost: $\mathbb{C}$ cannot be made into an ordered field.
    
    \item \textbf{From $\mathbb{C}$ to $\mathbb{H}$ (Loss of Commutativity):}
    Setting $A = \mathbb{C}$ with complex conjugation, doubling with an orthogonal unit $j$ yields Hamilton's quaternions $\mathbb{H} = \mathbb{C} \oplus \mathbb{C} j = \operatorname{span}\{1, i, j, k\}$, where $k := ij$. Because the involution on $\mathbb{C}$ is non-trivial ($i^* = -i$), the term $d^* b \neq b d^*$ in Eq.~\eqref{eq:cayley_dickson_multiplication} breaks commutativity:
    \begin{equation}
    i j = -j i = k, \qquad q_1 q_2 \neq q_2 q_1.
    \end{equation}
    However, associativity is preserved: $(q_1 q_2) q_3 = q_1 (q_2 q_3)$ for all $q_1, q_2, q_3 \in \mathbb{H}$.
    
    \item \textbf{From $\mathbb{H}$ to $\mathbb{O}$ (Loss of Associativity):}
    Setting $A = \mathbb{H}$ with quaternionic conjugation, doubling with a transverse unit $\ell \equiv e_4$ yields Cayley's octonions $\mathbb{O} = \mathbb{H} \oplus \mathbb{H} \ell = \operatorname{span}\{1, e_1, \dots, e_7\}$. Because quaternionic multiplication is non-commutative ($ac \neq ca$), the term $(da + bc^*)$ in Eq.~\eqref{eq:cayley_dickson_multiplication} destroys associativity:
    \begin{equation}
    (x y) z \neq x (y z) \quad \text{for generic } x, y, z \in \mathbb{O}.
    \end{equation}
    Nevertheless, alternativity ($x(xy) = (xx)y$) and norm multiplicativity ($|xy| = |x||y|$) are strictly retained.
\end{enumerate}

\subsection{The Sedenions $\mathbb{S}$, Zero Divisors, and Hurwitz's Termination Theorem}\label{subsec:sedenions_hurwitz}
A natural mathematical question is whether the doubling procedure can be iterated indefinitely. Applying the Cayley--Dickson construction once more to the octonions yields the 16-dimensional \emph{sedenions}:
\begin{equation}
\mathbb{S} = \mathbb{O} \oplus \mathbb{O} e_8 = \operatorname{span}\{1, e_1, e_2, \dots, e_{15}\}.
\end{equation}
At this step, however, a catastrophic qualitative breakdown occurs: $\mathbb{S}$ ceases to be a division algebra. Specifically, the sedenions possess non-trivial \textbf{zero divisors}---non-zero elements whose product is identically zero~\cite{albuquerque1999}. 

To see this explicitly, consider the following two unit vectors in $\mathbb{S}$:
\begin{equation}
x = e_3 + e_{10}, \qquad y = e_6 - e_{15}.
\end{equation}
Both elements are non-zero with norm $|x| = \sqrt{1^2 + 1^2} = \sqrt{2} \neq 0$ and $|y| = \sqrt{1^2 + (-1)^2} = \sqrt{2} \neq 0$. Applying the sedenion multiplication table derived from Eq.~\eqref{eq:cayley_dickson_multiplication}, we obtain:
\begin{equation}
x y = (e_3 + e_{10})(e_6 - e_{15}) = e_3 e_6 - e_3 e_{15} + e_{10} e_6 - e_{10} e_{15}.
\end{equation}
Using the sedenionic structure constants, one finds $e_3 e_6 = e_5$, $e_3 e_{15} = -e_9$, $e_{10} e_6 = -e_9$, and $e_{10} e_{15} = e_5$. Substituting these values gives:
\begin{equation}
x y = e_5 - (-e_9) + (-e_9) - e_5 = 0.
\end{equation}
Thus, $x y = 0$ even though $x \neq 0$ and $y \neq 0$.

The emergence of zero divisors has profound mathematical and physical ramifications:
\begin{enumerate}
    \item \textbf{Failure of Norm Multiplicativity:} Because $|x| = \sqrt{2}$, $|y| = \sqrt{2}$, but $|xy| = 0$, the composition law fails in $\mathbb{S}$:
    \begin{equation}
    |x y| = 0 \neq 2 = |x| \, |y|.
    \end{equation}
    \item \textbf{Loss of Alternativity:} The sedenions fail both the left and right alternative laws; in general, $x(xy) \neq (xx)y$.
    \item \textbf{Loss of Invertibility and Causal Degradation:} In physical sequence processing and discrete wave mechanics, the presence of zero divisors represents irreversible information loss and the destruction of unitary state evolution. If two non-trivial wave packets could interact to yield zero, backwards causal reconstruction would become mathematically impossible.
\end{enumerate}

The mathematical ceiling on normed division algebras is codified in
\begin{theorem}[Hurwitz's Normed Division Algebra Theorem (1898)~\cite{hurwitz1898}]\label{thm:hurwitz}
Every real normed division algebra is isomorphic to one of exactly four algebras: the real numbers $\mathbb{R}$ $(\dim = 1)$, the complex numbers $\mathbb{C}$ $(\dim = 2)$, Hamilton's quaternions $\mathbb{H}$ $(\dim = 4)$, or Cayley's octonions $\mathbb{O}$ $(\dim = 8)$.
\end{theorem}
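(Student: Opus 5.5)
The plan is to follow the classical route through composition algebras: first show that any real normed division algebra $A$ carries a nondegenerate multiplicative quadratic form and an involution, then show that $A$ is built by Cayley--Dickson doubling from $\mathbb{R}$, and finally show that doubling past $\mathbb{O}$ destroys the norm identity.

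\textbf{Step 1: the norm comes from an inner product.} Take $A$ with a norm satisfying $|xy| = |x|\,|y|$. First I will show that this norm satisfies the parallelogram law. I would try to get this directly from multiplicativity, using the standard Hurwitz/Urbanik--Wright argument for normed (as opposed to merely composition) algebras: the convex unit ball together with multiplicativity forces the norm to be Euclidean. This is the step I expect to be the main obstacle if the norm is not assumed a priori to arise from an inner product. If the hypothesis is read as ``composition algebra with positive-definite quadratic form,'' the step is immediate, and I will state that reading explicitly.

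From the resulting inner product $\langle\cdot,\cdot\rangle$, I will polarize $|xy|^2 = |x|^2|y|^2$. This gives the identities
\[
\langle xy, xz\rangle = |x|^2\langle y,z\rangle, \qquad \langle xy, wz\rangle + \langle wy, xz\rangle = 2\langle x,w\rangle\langle y,z\rangle .
\]
Next I will define $x^* = 2\langle x,1\rangle - x$ and derive
\[
x x^* = |x|^2, \qquad (xy)^* = y^* x^*, \qquad x^*(xy) = |x|^2 y .
\]
The last identity yields alternativity, exactly as the paper recalls for $\mathbb{O}$.

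\textbf{Step 2: the doubling lemma.} Let $B \subsetneq A$ be a finite-dimensional unital subalgebra closed under $*$, on which the form is nondegenerate. Choose a unit $\ell \perp B$. I will verify the following:
\begin{itemize}
\item $\ell^2 = -1$;
\item $B\ell \perp B$;
\item $B + B\ell$ is a subalgebra whose product is exactly the rule \eqref{eq:cayley_dickson_multiplication}.
\end{itemize}
Each of these is a direct consequence of the polarized identities. The converse also has to be proved: if $B\oplus B\ell$ with this product is again a composition algebra, then $B$ is associative. Likewise, if $B\oplus B\ell$ is associative, then $B$ is commutative and associative. This matches the pattern of lost properties listed in Section~\ref{subsec:cayley_dickson_procedure}.

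\textbf{Step 3: iterate from $\mathbb{R}$.} Start with $B_0 = \mathbb{R}1$. As long as $B_k \neq A$, Step 2 produces $B_{k+1} = B_k \oplus B_k\ell$, so the chain runs $\mathbb{R}\subset\mathbb{C}\subset\mathbb{H}\subset\mathbb{O}$ and the dimension doubles each time. Suppose $\mathbb{O} \subsetneq A$. Then $A$ would contain $\mathbb{O}\oplus\mathbb{O}\ell$. By the converse in Step 2, $\mathbb{O}$ would have to be associative, which contradicts $(xy)z \neq x(yz)$. Concretely, this is the sedenion failure exhibited in Section~\ref{subsec:sedenions_hurwitz}, where the zero divisors give $|xy| \neq |x|\,|y|$. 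So the process stops at dimension $1$, $2$, $4$ or $8$.

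\textbf{Step 4: uniqueness.} Uniqueness up to isomorphism follows because each doubling step is determined canonically by $B$. Any two choices of orthogonal unit $\ell$ give isomorphic doubles, via the map $a + b\ell \mapsto a + b\ell'$.
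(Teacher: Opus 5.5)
The paper does not prove this theorem. It quotes it from Hurwitz, and its only supporting material is the sedenion computation in Section~\ref{subsec:sedenions_hurwitz}. That computation shows that one particular 16-dimensional algebra fails, but it says nothing about normed division algebras not known in advance to arise by doubling. Your Steps 2--4 supply exactly the missing mechanism, and they are the standard, correct argument. Inside any such algebra, adjoining orthogonal units to $\mathbb{R}1$ forces an iterated Cayley--Dickson double. The lemma ``$B\oplus B\ell$ composition $\Rightarrow$ $B$ associative'' then stops the chain at $\mathbb{O}$. It is good that your contradiction rests on that lemma rather than on the paper's explicit zero-divisor pair. The structure constants quoted there do not match the paper's own Fano table: the triad $(e_3,e_4,e_6)$ gives $e_3e_6=-e_4$, not $e_5$.

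\textbf{The gap is Step 1.} As written it states the conclusion rather than proving it. Falling back to ``positive-definite composition algebra'' (Hurwitz's original sums-of-squares setting) replaces the hypothesis of an arbitrary multiplicative norm. In finite dimension the step is short:
\begin{itemize}
\item The linear isometries $G=\{T\in\mathrm{GL}(A):|Tx|=|x|\ \text{for all } x\}$ form a compact group. Averaging any inner product over $G$ with Haar measure gives a $G$-invariant inner product $\langle\cdot,\cdot\rangle$.
\item For $|x|=1$, the map $L_x\colon y\mapsto xy$ satisfies $|L_xy|=|y|$. So it is injective, hence bijective, hence in $G$, and $L_x(1)=x$.
\item Thus every unit vector lies in the $G$-orbit of $1$. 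This gives $\langle x,x\rangle=\langle 1,1\rangle\,|x|^2$ for all $x$, and after rescaling the norm is Euclidean.
\end{itemize}
The L\"owner--John ellipsoid of the unit ball, being unique and hence $G$-invariant, gives the same conclusion and is presumably what you meant by the convex unit ball. Without finite dimension compactness is unavailable, and Step 1 amounts to the Urbanik--Wright theorem. So state finite dimension as a hypothesis, as is customary.

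\textbf{Make the unit explicit.} You use it throughout: $B_0=\mathbb{R}1$, $x^*=2\langle x,1\rangle-x$, and $|1|=1$ from $|1|=|1|^2$. The classification fails for non-unital algebras. For example, $\mathbb{C}$ with $x\circ y=\bar{x}\,\bar{y}$ satisfies $|x\circ y|=|x|\,|y|$ but has no identity, so it is not isomorphic to any of the four.

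\textbf{The word ``exactly''.} It also needs the other half of your doubling criterion ($B$ associative $\Rightarrow$ $B\oplus B\ell$ is a composition algebra), to know that $\mathbb{O}$ itself qualifies. The paper asserts this at the start of Section~\ref{sec:octonions_algebra}, so citing it is acceptable.
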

Consequently, the normed division sequence terminates at the octonions 
$\mathbb{O}$.

\subsection{Alternativity, Artin's Theorem, and Moufang Identities}\label{subsec:alternativity_moufang}
Although $\mathbb{O}$ fails general associativity ($x(yz) \neq (xy)z$), it retains crucial sub-associative symmetries that make algebraic analysis exact and tractable.

\begin{definition}[The Associator]
For any three octonions $x, y, z \in \mathbb{O}$, their associator is the trilinear map $[\cdot, \cdot, \cdot] : \mathbb{O} \times \mathbb{O} \times \mathbb{O} \to \mathbb{O}$ defined by:
\begin{equation}
[x, y, z] := (x y) z - x (y z).
\end{equation}
\end{definition}

\begin{theorem}[Alternativity of the Octonions]
The octonion algebra $\mathbb{O}$ is alternative: the associator is completely alternating (skew-symmetric under transposition of any two adjacent arguments):
\begin{equation}
[x, y, z] = -[y, x, z] = -[x, z, y] = [y, z, x].
\end{equation}
In particular, the left and right alternative laws hold identically:
\begin{equation}
[x, x, y] = 0 \implies (x x) y = x (x y), \qquad [y, x, x] = 0 \implies (y x) x = y (x x).
\end{equation}
\end{theorem}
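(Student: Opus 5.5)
The plan is to prove the two alternative laws $[x,x,y]=0$ and $[y,x,x]=0$ first and then derive full skew-symmetry from them by polarization. Trilinearity of the associator comes directly from bilinearity of octonion multiplication, so the whole statement reduces to these two quadratic identities.

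For the alternative laws I would work from conjugation and the norm, not from the $7^3$ basis triples. The key facts are:
\begin{itemize}
\item every $x\in\mathbb{O}$ satisfies $x^2 = 2x_0 x - |x|^2$, since $x+x^*=2x_0$ and $xx^*=|x|^2$;
\item conjugation is an anti-involution, $(xy)^*=y^*x^*$, which is inherited through the Cayley--Dickson rule \eqref{eq:cayley_dickson_multiplication}.
\end{itemize}
The cleanest route is inductive along the doubling $\mathbb{O}=\mathbb{H}\oplus\mathbb{H}\ell$. Suppose $A$ is associative (here $A=\mathbb{H}$) and the involution satisfies $a+a^*\in\mathbb{R}$ and $aa^*=a^*a\in\mathbb{R}$. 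Write $x=a+b\ell$ and $y=c+d\ell$, and expand $(xx)y$ and $x(xy)$ using \eqref{eq:cayley_dickson_multiplication} twice. The $A$-components and the $A\ell$-components should then match term by term. This uses three ingredients:
\begin{itemize}
\item associativity of $\mathbb{H}$;
\item the fact that $a+a^*$ and $bb^*$ are real and hence central;
\item $a^2 = (a+a^*)a - aa^*$, which lets every term quadratic in $a$ be rewritten linearly in $a$ with real coefficients.
\end{itemize}
The right alternative law then follows from the left one by applying conjugation: $((yx)x)^* = x^*(x^*y^*)$ and $(y(xx))^* = (x^*x^*)y^*$, and these are equal by the left law applied to $x^*$ and $y^*$.

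From the alternative laws I would polarize. Replacing $x$ by $x+z$ in $[x,x,y]=0$ and using trilinearity gives $[x,z,y]+[z,x,y]=0$, so the associator is antisymmetric in its first two slots. The same substitution in $[y,x,x]=0$ gives antisymmetry in the last two slots. Adjacent transpositions generate $S_3$, so the associator is completely alternating. In particular the cyclic identity $[x,y,z]=[y,z,x]$ is the composite of two transpositions, each contributing a sign, and these cancel.

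The main obstacle is the component bookkeeping in the Cayley--Dickson expansion. The $A\ell$-component contains terms such as $d(aa)$ versus $(da)a$ and $b(c^*a^*)$-type products. Their agreement depends delicately on where conjugates sit and on the reality of $a+a^*$. If that becomes unwieldy, the fallback is to check $[e_a,e_a,e_b]=0$ and $[e_a,e_b,e_a]=0$ on basis units using the Fano triads $\mathcal{T}$. The identity $[e_a,e_a,y]=0$ is immediate from $e_a^2=-1$ together with the fact that left multiplication $L_{e_a}$ squares to $-1$, since $e_a(e_a e_b)=-e_b$ within each quaternionic triad. However, basis checks of the diagonal alone do not yield the general law, because the form is quadratic. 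One would need the polarized identity on all triples of distinct basis units, which amounts to the statement that the associator of three distinct imaginary units is alternating. That would be verified via the Fano-plane structure, splitting into the case where the three units lie on a common line (associator zero) and the case where they do not.
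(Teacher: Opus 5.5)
The paper gives no proof of this statement. It is quoted in Section~\ref{subsec:alternativity_moufang} as background, and Section~\ref{subsec:cayley_dickson_procedure} only asserts that alternativity survives the doubling $\mathbb{H}\to\mathbb{O}$. Your argument supplies exactly that missing step, and it is correct. You get the left law from the doubling rule and the right law by conjugation; the rule $(a+b\ell)^*=a^*-b\ell$ does give $(xy)^*=y^*x^*$ on the double. Polarization, together with the fact that adjacent transpositions generate $S_3$, then yields full skew-symmetry.

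The bookkeeping you flagged as the main obstacle is short. Put $t=a+a^*$ and $|x|^2=aa^*+bb^*$. Then $xx=(a^2-b^*b)+tb\ell=tx-|x|^2$, hence $(xx)y=t\,xy-|x|^2y$. The two components of $x(xy)$ are $a(ac-d^*b)-(a^*d^*+cb^*)b=t(ac-d^*b)-|x|^2c$ and $(da+bc^*)a+b(c^*a^*-b^*d)=t(da+bc^*)-|x|^2d$. So $x(xy)=(xx)y$, and the Fano-triad fallback is unnecessary.

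The one input you take on trust is that the Fano-table algebra of Section~2.1 coincides with the double $\mathbb{H}\oplus\mathbb{H}\ell$ under \eqref{eq:cayley_dickson_multiplication}. The paper asserts this but never checks it. The standard abstract derivation of that formula inside $\mathbb{O}$ already uses the alternative or composition identities, so citing it would be circular. To stay non-circular, either take the double as the definition of $\mathbb{O}$, or verify the table on an explicit basis. A suitable one is $i=e_7$, $j=e_1$, $k=e_3$, $\ell=e_4$, giving $i\ell=e_5$, $j\ell=-e_2$, $k\ell=e_6$. Note that the paper's ``$\ell\equiv e_4$'' only makes sense with a quaternionic line that avoids $e_4$.

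The other standard route starts from $|xy|=|x||y|$, which the paper also states. Polarizing it twice gives $x^*(xy)=|x|^2y$. With $x^*=2x_0-x$, this is exactly $x(xy)=2x_0\,xy-|x|^2y=(xx)y$. That route is shorter and works in any composition algebra. However, it only trades one unproved input (the doubling identification) for another (norm multiplicativity).
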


\begin{theorem}[Artin's Theorem for Octonions~\cite{schafer1966}]\label{thm:artin}
Any subalgebra of $\mathbb{O}$ generated by any two elements $x, y \in \mathbb{O}$ is strictly associative. That is, for all polynomials or words $W(x, y)$, parenthesization is unambiguous:
\begin{equation}
[x, y, x] = 0, \quad [x, y, y] = 0, \quad (x^p y^q) x^r = x^p (y^q x^r).
\end{equation}
\end{theorem}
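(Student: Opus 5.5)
The plan is to derive Artin's theorem from alternativity (the preceding Theorem), using only the fact that the associator is trilinear and completely alternating. The argument is the classical one for alternative algebras, arranged in three steps.

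\textbf{Step 1 (base identities).} Complete alternation gives $[x,x,y]=0$ and $[y,x,x]=0$. Applying an adjacent transposition to the second, $[x,y,x]=-[x,x,y]=0$, which is the flexible law. In the same way $[x,y,y]=0$. So every associator whose entries are drawn from $\{x,y\}$ with a repeated entry vanishes. By trilinearity, every associator of three elements of $V:=\operatorname{span}_\mathbb{R}\{1,x,y\}$ is a multiple of $[x,y,x]$, $[x,y,y]$, or an associator involving $1$. All of these are zero. Hence $[a,b,c]=0$ for all $a,b,c\in V$.

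\textbf{Step 2 (from generators to the generated subalgebra).} Let $A$ be the subalgebra generated by $x,y$; it is spanned by all parenthesized monomials in $x,y$. We want $[a,b,c]=0$ for all $a,b,c\in A$. The tool is the Moufang identities together with the linearized Teichmüller/Kleinfeld identity, both valid in any alternative algebra:
\[
[ab,c,d]-[a,bc,d]+[a,b,cd]=a[b,c,d]+[a,b,c]d .
\]
Combined with complete alternation, these let one prove the auxiliary identity
\[
[ab,c,d]=a[b,c,d]+[a,c,d]b \quad\text{whenever } [a,b,c]=[a,b,d]=0,
\]
or more conveniently the Moufang form $[x,y,xz]=x[x,y,z]$ and its mirror images. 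We induct on the total degree of monomials. Suppose all associators among monomials of degree less than $N$ vanish. Write a monomial of degree $N$ as $a=a_1a_2$ with $\deg a_i<N$. The identity above then expresses $[a_1a_2,b,c]$ through associators of lower total degree, which vanish by hypothesis, together with terms of the form $a_1[a_2,b,c]$. Those last terms also vanish by the induction hypothesis. This yields $[a,b,c]=0$ for all monomials, and trilinearity extends it to all of $A$.

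\textbf{Step 3 (consequences).} Once $A$ is associative, every word $W(x,y)$ has a value independent of its parenthesization, by the standard generalized-associativity argument. In particular $(x^py^q)x^r=x^p(y^qx^r)$. The powers $x^p$ are themselves well defined because $\langle x\rangle$ is power-associative, which follows from $[x,x,x]=0$ by the same induction.

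\textbf{Main obstacle.} I expect Step 2 to be the hard part: choosing the induction hypothesis so that every associator produced by the expansion identity really has lower degree. The delicate case is when $b$ or $c$ also has degree comparable to $N$. My first approach would be Artin's original device. Instead of general monomials, I would show that the set $\{z\in\mathbb{O} : [z,x,y]=0 \text{ and } \ldots\}$ of elements associating with everything generated so far is closed under products, using the Moufang identities. That is Zorn's "nucleus of generated subalgebra" approach.

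If this becomes unwieldy, there is a fallback specific to $\mathbb{O}$. The automorphism group $G_2$ acts transitively on pairs of orthonormal imaginary units. So after subtracting real parts and applying Gram--Schmidt, $x,y$ lie in $\operatorname{span}\{1,e_1,e_2,e_4\}$, which is a quaternion subalgebra by the Fano triad $(e_1,e_2,e_4)\in\mathcal{T}$. That subalgebra is associative, which immediately gives the theorem for $\mathbb{O}$.
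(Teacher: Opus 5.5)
The paper gives no proof of this statement; it only cites Schafer. Your Steps~1 and~3 are fine, but Step~2 has a genuine gap.

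Your ``delicate case'' disappears if you induct on the total degree $\deg a+\deg b+\deg c$. Then $[a_1,a_2,b]$, $[a_1,a_2,c]$, $[a_2,b,c]$ and $[a_1,b,c]$ all have smaller total degree. But the right-hand side of your auxiliary identity then vanishes, so in the inductive step the identity \emph{is} the claim $[a_1a_2,b,c]=0$. Saying that Teichm\"uller plus alternation ``lets one prove'' it asserts the inductive step, which is the whole content of the theorem.

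Your substitute is also wrong as stated. The middle Moufang identity $(xy)(zx)=x((yz)x)$ and flexibility give $[x,y,zx]=x[x,y,z]$, and its mirror is $[x,y,xz]=[x,y,z]x$, not $x[x,y,z]$. For $x=e_1$, $y=e_2$, $z=e_3$ one has $xz=e_7$ and $[e_1,e_2,e_7]=-2e_5=[e_1,e_2,e_3]e_1$, whereas $e_1[e_1,e_2,e_3]=+2e_5$. One-variable identities of this kind only show that $\{z:[x,y,z]=0\}$ is stable under one-sided multiplication by $x$ and $y$. They do not control $[pq,r,s]$ for composite monomials.

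The missing ingredient is Kleinfeld's function $f(w,x,y,z)=[wx,y,z]-x[w,y,z]-[x,y,z]w$. It is alternating in any alternative algebra: alternation reduces to $f(x,x,y,z)=0$ and $f(w,x,x,z)=0$ (the case $f(w,x,y,y)=0$ is trivial). After cyclically permuting associators, these are the identities $[x^2,y,z]=x[x,y,z]+[x,y,z]x$ and $[x,y,zx]=x[x,y,z]$. Under the total-degree hypothesis the last two terms of $f(p,q,r,s)$ vanish, so $g(p,q,r,s):=[pq,r,s]$ is alternating. Substitute $[p,qr,s]=-g(q,r,p,s)=-g(p,q,r,s)$ and $[p,q,rs]=g(r,s,p,q)=g(p,q,r,s)$ into the Teichm\"uller identity. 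Its right-hand side vanishes, which gives $3[pq,r,s]=0$ and closes the induction.

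Your fallback, by contrast, is a correct proof for $\mathbb{O}$, which is all the statement asserts. It also does not need $G_2$, whose transitivity on orthonormal pairs is usually proved via the very fact below.
If the imaginary parts of $x,y$ are linearly dependent, both lie in $\operatorname{span}\{1,u\}\cong\mathbb{C}$ for a unit imaginary $u$. Otherwise take orthonormal imaginary $u,v$ spanning them. Then $uv$ is a unit imaginary orthogonal to $u$ and $v$. Alternativity and flexibility give $u(uv)=-v$, $(uv)u=v$, $v(uv)=u$ and $(uv)v=-u$. Hence $\operatorname{span}\{1,u,v,uv\}$ is closed under multiplication with the structure constants of $\mathbb{H}$. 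It is therefore associative and contains the subalgebra generated by $x,y$.
The trade-off is scope: this route uses the norm and dimension of $\mathbb{O}$ and says nothing about other alternative algebras. The repaired induction proves Artin's theorem from alternativity alone.
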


\begin{theorem}[Moufang Identities~\cite{moufang1935}]
For all $x, y, z \in \mathbb{O}$, the following four equivalent Moufang identities hold:
\begin{align}
(z x) (y z) &= z \big( (x y) z \big), \label{eq:moufang1} \\
\big( z (x z) \big) y &= z \big( x (z y) \big), \label{eq:moufang2} \\
\big( (x y) z \big) x &= x \big( y (z x) \big), \label{eq:moufang3} \\
(x y x) z &= x \big( y (x z) \big). \label{eq:moufang4}
\end{align}
\end{theorem}
Artin's theorem guarantees that unambiguous phase rotations can be performed along any 2-dimensional plane $\operatorname{span}\{1, \hat{\mathbf{n}}\} \cong \mathbb{C}$, while the Moufang identities ensure that triadic wave interactions preserve non-trivial symmetry loops.

\section{The Discrete Sequence Spaces and the Operational Genesis of $\mathbb{C}^\mathbb{Z} \to \mathbb{H}^\mathbb{Z} \to \mathbb{O}^\mathbb{Z}$}\label{sec:discrete_sequence_spaces}

We now construct the discrete wave sequence spaces $\mathbb{O}^\mathbb{Z}$, noting that, in contrast to static point algebras, physical wave propagation and causal dynamics operate over maps from the integer lattice $\mathbb{Z}$ to the division algebras.

\subsection{The Discrete Sequence Space $\mathcal{S}_\mathbb{O} := \mathbb{O}^\mathbb{Z}$ and Unwrapped Phase Lifting}\label{subsec:sequence_space_s7}
\begin{definition}[The Octonionic Sequence Space $\mathcal{S}_\mathbb{O}$]
The ambient space of discrete octonionic wave sequences is the set of all mappings from the integer lattice $\mathbb{Z}$ to $\mathbb{O}$:
\begin{equation}
\mathcal{S}_\mathbb{O} := \mathbb{O}^\mathbb{Z} = \{ \mathbf{\Omega} : \mathbb{Z} \to \mathbb{O} \}.
\end{equation}
Similarly, the complex and quaternionic sequence spaces are $\mathcal{S}_\mathbb{C} := \mathbb{C}^\mathbb{Z}$ and $\mathcal{S}_\mathbb{H} := \mathbb{H}^\mathbb{Z}$.
\end{definition}

Recall from Section~\ref{subsec:unit_7sphere} that unimodular octonions reside on the compact 7-sphere $S^7$ and admit polar Euler-type exponential representations $u = \exp(\hat{\mathbf{n}}\theta)$. When analyzing unimodular sequence configurations $\mathbf{\Omega} \in (S^7)^\mathbb{Z}$ across the integer lattice $\mathbb{Z}$, direct evaluation on $S^7$ encounters topological branch-cut and wrapping ambiguities when taking rational powers or multi-branch roots. To eliminate branch discontinuities and provide a rigorous setting for sequence wave dynamics, we lift unimodular sequences into the unwrapped affine phase space:

\begin{definition}[Unwrapped Affine Phase Lifting in $\mathbb{R}^7/\mathbb{Z}^7$]
To eliminate branch-cut ambiguities and enable well-defined multi-branch root extraction on discrete wave sequences, we lift unimodular sequences $\mathbf{\Omega} \in (S^7)^\mathbb{Z}$ to their unwrapped affine phase trajectories $\bm{\Theta} \in (\mathbb{R}^7)^\mathbb{Z}$ such that:
\begin{equation}
\bm{\Theta}[m] = \sum_{a=1}^7 \theta_a[m] \, e_a \in \mathbb{R}^7, \quad m \in \mathbb{Z}.
\end{equation}
The physical evaluation map $\mathcal{E} : (\mathbb{R}^7)^\mathbb{Z} \to (S^7)^\mathbb{Z}$ is defined by:
\begin{equation}
\mathcal{E}\big( \bm{\Theta} \big)[m] := \exp\big( 2\pi \, \bm{\Theta}[m] \big) = \cos\big( 2\pi \|\bm{\Theta}[m]\| \big) + \frac{\bm{\Theta}[m]}{\|\bm{\Theta}[m]\|} \sin\big( 2\pi \|\bm{\Theta}[m]\| \big).
\end{equation}
\end{definition}

\subsection{Elementary Operators on the Sequence Space}\label{subsec:elementary_operators}
On the set of lifted phase sequences, we define five elementary algebraic and analytic operations:
\begin{enumerate}
    \item \textbf{Pointwise Sequence Multiplication $(\star)$:}
    \begin{equation}
    (\mathbf{\Omega}_1 \star \mathbf{\Omega}_2)[m] := \mathbf{\Omega}_1[m] \cdot \mathbf{\Omega}_2[m].
    \end{equation}
    
    \item \textbf{Pointwise Inversion $(\cdot)^{-1}$:}
    \begin{equation}
    (\mathbf{\Omega})^{-1}[m] := (\mathbf{\Omega}[m])^{-1} = \frac{\mathbf{\Omega}[m]^*}{|\mathbf{\Omega}[m]|^2}.
    \end{equation}
    
    \item \textbf{Multi-Branch Rational Roots $(\cdot)^{1/n}$ ($n \in \mathbb{N}_{\ge 1}$):}
    Defined in the lifted unwrapped phase space by dividing the affine coordinate:
    \begin{equation}
    \left( \mathbf{\Omega}^{(e_a)} \right)^{1/n}[m] := \exp\left( 2\pi e_a \left( \frac{m + k}{n} \right) \right), \quad k \in \{0, 1, \dots, n-1\}.
    \end{equation}
    
    \item \textbf{Pointwise Sequence Addition $(+)$ and the Sequence Algebra $\mathbb{C}[\mathcal{M}_\mathbb{O}]$:}
    Linear combinations $(\alpha \mathbf{\Omega}_1 + \beta \mathbf{\Omega}_2)[m] := \alpha \mathbf{\Omega}_1[m] + \beta \mathbf{\Omega}_2[m]$ embed the multiplicative loop into the non-associative sequence ring, enabling the construction of idempotent particulate sieve projectors and closed probability envelopes.
    
    \item \textbf{The Discrete Cumulative Integral Operator $\mathcal{I}$:}
    The operator $\mathcal{I}: \mathcal{S}_\mathbb{O} \to \mathcal{S}_\mathbb{O}$ is defined pointwise by:
    \begin{equation}\label{eq:def_cumulative_integral_oct}
    (\mathcal{I} \mathbf{\Omega})[k] := \sum_{j=1}^k \mathbf{\Omega}[j], \quad \forall k \ge 1,
    \end{equation}
    with $(\mathcal{I} \mathbf{\Omega})[0] := 0$ and $(\mathcal{I} \mathbf{\Omega})[-k] := -\sum_{j=-k+1}^0 \mathbf{\Omega}[j]$ for $k \ge 1$. For any primitive rational wave sequence $\mathbf{\Omega}[m] = \exp(2\pi e_a (m/n))$, the cumulative sum traces an exact equilateral, equiangular regular $n$-gon in the imaginary plane $\operatorname{span}\{1, e_a\} \cong \mathbb{C}$, with circumradius:
    \begin{equation}\label{eq:circumradius_ngon_oct}
    R_n = \frac{1}{2 \sin\left( \frac{\pi}{n} \right)}.
    \end{equation}
\end{enumerate}

\subsection{Canonical Primitives and Genesis of the Complex Wave Space $\mathbb{C}^\mathbb{Z}$}\label{subsec:genesis_complex_sequence}

Just as the complex wave sequence space $\mathbb{C}^\mathbb{Z}$ is derived 
in \cite{smith2025closure}, \cite{smith2026rational} by applying the elementary sequence operator suite 
$\mathcal{O} = \{\star, (\cdot)^{-1}, (\cdot)^{1/n}\}$ 
to the two scalar primitives:
\begin{equation}
\mathbf{E}_Z[m] := \exp(2\pi i m) \equiv 1, \qquad \mathbf{E}_{\bar{1}}[m] := \exp(2\pi i) \equiv 1, \quad \forall m \in \mathbb{Z}
\end{equation}
in which $\mathbf{E}_Z$ represents the uniform advance of the discrete integer clock/lattice coordinate, while $\mathbf{E}_{\bar{1}}$ provides the static reference unit of phase calibration,
the higher sequence spaces $\mathbb{H}^\mathbb{Z}$ and $\mathbb{O}^\mathbb{Z}$ can be derived from the same primitive terms by also applying the Cayley--Dickson doubling construction. This remarkable fact establishes that 
the non-commutative and non-associative wave spaces do not need to be assumed: they are operational consequences of doubling the scalar primitives under spatial displacement and branch coupling.

Applying the sequence operator suite $\mathcal{O} = \{\star, (\cdot)^{-1}, (\cdot)^{1/n}\}$ to $\{\mathbf{E}_Z, \mathbf{E}_{\bar{1}}\}$ in the unwrapped phase space generates the complete abelian rational wave closure $\mathcal{G}_\mathbb{C} \cong (\mathbb{Q}/\mathbb{Z})^2$, giving the complex wave sequence space $\mathcal{S}_\mathbb{C} = \mathbb{C}^\mathbb{Z}$ governed by a single imaginary unit $I \equiv e_7$ ($I^2 = -1$).

\subsection{Operational Genesis of the Quaternionic Wave Space $\mathbb{H}^\mathbb{Z}$}\label{subsec:genesis_quaternion_sequence}
We now show how spatial displacement and discrete reflection force the extension from the scalar complex wave space $\mathbb{C}^\mathbb{Z}$ to the non-commutative quaternionic wave space $\mathbb{H}^\mathbb{Z}$.

\begin{lemma}[Spatial Translation, Sequence Involution, and Anti-Commutation]\label{lem:spatial_anticommutation}
Let $\mathcal{S}_\mathbb{C} = \mathbb{C}^\mathbb{Z}$ be the complex sequence space with central phase generator $\mathbf{I} \equiv \exp(2\pi e_7)$. Let active wave sequences be subjected to discrete spatial translation $T_k \mathbf{\Omega}[m] := \mathbf{\Omega}[m-k]$ and sequence involution $\sigma: \mathbf{\Omega}[m] \mapsto \mathbf{\Omega}^*[-m]$.
Then:
\begin{enumerate}
    \item Sequence involution reverses internal phase winding: $\sigma(\mathbf{I} \star \mathbf{\Omega}) = -\mathbf{I} \star \sigma(\mathbf{\Omega})$.
    \item Coupling spatial translation with internal phase rotation requires an orthogonal sequence displacement generator $\mathbf{J}$ satisfying $\mathbf{J}^2 = -\mathbf{1}$.
    \item The operational interplay between spatial reflection and internal rotation enforces strict anti-commutation between the generators:
    \begin{equation}\label{eq:weyl_anticommutation}
    \mathbf{I} \star \mathbf{J} = -\mathbf{J} \star \mathbf{I} \equiv \mathbf{K}, \quad \text{with } \mathbf{K}^2 = -\mathbf{1}.
    \end{equation}
\end{enumerate}
\end{lemma}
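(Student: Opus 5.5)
The plan is to make the three assertions precise as pointwise identities between constant sequences, and then verify each one with the Cayley--Dickson multiplication law \eqref{eq:cayley_dickson_multiplication}. I will read the generator $\mathbf{I}$ as the constant unimodular sequence $\mathbf{I}[m]=e_7$. This is its value in the unwrapped lift at quarter-period phase; the literal evaluation $\exp(2\pi e_7)$ would give $1$, so the statement must be read through the lifting map $\mathcal{E}$. Since $\star$ acts pointwise, every claim reduces to an identity in $\mathbb{C}=\operatorname{span}\{1,e_7\}$ or in its double $\mathbb{H}$, applied at each site $m$. Translations $T_k$ commute with $\star$ on constant sequences, so they play no role beyond motivating the construction.

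\emph{Step 1 (phase reversal).} I compute directly:
\[
\sigma(\mathbf{I}\star\mathbf{\Omega})[m]=\bigl(e_7\,\mathbf{\Omega}[-m]\bigr)^*=\mathbf{\Omega}^*[-m]\,e_7^*=-\mathbf{\Omega}^*[-m]\,e_7 .
\]
This uses the anti-automorphism property $(ab)^*=b^*a^*$ from Section~\ref{subsec:cayley_dickson_procedure}. Because $\mathbf{\Omega}$ takes values in the commutative field $\mathbb{C}$, this equals $-e_7\,\mathbf{\Omega}^*[-m]$, which is $-\mathbf{I}\star\sigma(\mathbf{\Omega})$.

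\emph{Step 2 (necessity of $\mathbf{J}$).} I formalise ``coupling translation with internal rotation'' as the requirement that the reflection $\sigma$ be implemented internally by left--right multiplication by a unimodular sequence generator $\mathbf{J}$, i.e.\ $\mathbf{J}\star\mathbf{I}\star\mathbf{J}^{-1}=-\mathbf{I}$, as Step~1 demands. Inside $\mathbb{C}^\mathbb{Z}$ no such $\mathbf{J}$ exists: pointwise multiplication there is commutative, so conjugation by any $\mathbf{J}$ gives back $\mathbf{I}$, and $\mathbf{I}=-\mathbf{I}$ is impossible. Hence the value algebra must be enlarged. I adjoin $\mathbf{J}[m]:=\ell$ through the doubling $\mathbb{C}\oplus\mathbb{C}\ell$, for which $\ell^2=-1$ holds by construction, giving $\mathbf{J}^2=-\mathbf{1}$. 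To argue that this choice is forced rather than merely sufficient, I note that $\mathbf{J}^2$ commutes with $\mathbf{J}$ and also with $\mathbf{I}$, because conjugating twice restores $\mathbf{I}$. Together with unimodularity this places $\mathbf{J}^2$ at $\pm1$, and the value $-1$ is selected by requiring $\mathbf{J}$ to be imaginary, i.e.\ orthogonal to $1$ and $e_7$.

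\emph{Step 3 (anti-commutation).} With $a=e_7$, $b=0$, $c=0$, $d=1$, the law \eqref{eq:cayley_dickson_multiplication} gives $e_7\ell=(e_7)\ell$. Swapping the factors gives $\ell e_7=(e_7^*)\ell=-e_7\ell$. Setting $\mathbf{K}:=\mathbf{I}\star\mathbf{J}$ yields $\mathbf{I}\star\mathbf{J}=-\mathbf{J}\star\mathbf{I}=\mathbf{K}$ pointwise. Finally, $(e_7\ell)(e_7\ell)$ evaluated by the same law equals $-e_7^*e_7=-1$, so $\mathbf{K}^2=-\mathbf{1}$. Associativity is not an issue here, since $\mathbb{H}$ is associative (and in any case Theorem~\ref{thm:artin} covers the two-generator subalgebra).

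The main obstacle is Step~2, because ``requires'' has no precise meaning in the statement as written. I would first try the conjugation-implementation formalisation above, since it yields a genuine non-existence argument inside $\mathbb{C}^\mathbb{Z}$. If the normalisation $\mathbf{J}^2=-\mathbf{1}$ cannot be fully forced, I would fall back on imposing it as the Cayley--Dickson convention $\ell^2=-1$, taking unimodularity and orthogonality to $\operatorname{span}\{1,e_7\}$ as the defining requirements on $\mathbf{J}$.
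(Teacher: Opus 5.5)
Your proposal is correct, and your route is more constructive than the paper's.

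\textbf{How the paper argues.} The paper never builds $\mathbf{J}$. It posits a generator with $\mathbf{J}^2=-\mathbf{1}$, and asserts via a ``canonical Weyl relation'' that orientation reversal requires $\mathbf{J}\star\mathbf{I}\star\mathbf{J}^{-1}=\mathbf{I}^*=-\mathbf{I}$. It then deduces $\mathbf{I}\star\mathbf{J}=-\mathbf{J}\star\mathbf{I}$ from $\mathbf{J}^{-1}=-\mathbf{J}$, and gets $\mathbf{K}^2=-\mathbf{I}^2\star\mathbf{J}^2=-\mathbf{1}$ by associative regrouping.

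\textbf{What your version adds.} You take the same conjugation condition as the precise content of ``requires''. You then supply two things the paper omits:
\begin{enumerate}
\item a proof that no such $\mathbf{J}$ exists in the commutative space $\mathbb{C}^\mathbb{Z}$;
\item an explicit model $\mathbf{J}[m]=\ell\in\mathbb{C}\oplus\mathbb{C}\ell$, in which anti-commutation and $\mathbf{K}^2=-\mathbf{1}$ follow from Eq.~\eqref{eq:cayley_dickson_multiplication}. Your three evaluations there are right.
\end{enumerate}
Your remark that the literal value $\exp(2\pi e_7)=1$ would reduce item~1 to $\sigma(\mathbf{\Omega})=-\sigma(\mathbf{\Omega})$ is also well taken. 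The paper silently uses $\mathbf{I}=e_7$, through $\mathbf{I}^*=-\mathbf{I}$ and $\mathbf{I}^2=-\mathbf{1}$.

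The paper's route is shorter and does not depend on a particular model. Yours shows that the postulates are consistent and that enlarging $\mathbb{C}$ is necessary.

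\textbf{One correction to Step~2.} Imaginarity is not what selects $\mathbf{J}^2=-\mathbf{1}$.
\begin{enumerate}
\item Inside $\mathbb{H}$ the sign is automatic. Write $J=a+v$ with $v$ imaginary. The condition $Je_7+e_7J=2ae_7-2(v\cdot e_7)=0$ gives $a=0$ and $v\perp e_7$, so $J^2=-|J|^2=-1$.
\item Outside $\mathbb{H}$ the sign is not forced. The split doubling with $\ell^2=+1$ has an imaginary $\ell$ ($\ell^*=-\ell$) that still satisfies $\ell e_7\ell^{-1}=-e_7$.
\end{enumerate}
What fixes the sign is positive-definiteness of the norm. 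That is exactly the Cayley--Dickson convention of your fallback, and it is what the paper assumes outright.
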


\begin{proof}
Let $\mathbf{\Omega}[m] = \exp(2\pi I \theta[m])$ be an active unimodular wave sequence. Sequence involution $\sigma$ acts by spatial reflection and complex conjugation:
\begin{equation}
\sigma(\mathbf{\Omega})[m] = \mathbf{\Omega}^*[-m] = \exp(-2\pi I \theta[-m]).
\end{equation}
Consequently, $\sigma(\mathbf{I} \star \mathbf{\Omega}) = \mathbf{I}^* \star \sigma(\mathbf{\Omega}) = -\mathbf{I} \star \sigma(\mathbf{\Omega})$, confirming that involution reverses the orientation of phase circulation. 

Now consider an entity possessing both internal phase rotation generated by $\mathbf{I}$ and external transverse spatial displacement generated by $\mathbf{J}$ with $\mathbf{J}^2 = -\mathbf{1}$. On the discrete lattice, spatial displacement and internal phase winding satisfy the canonical Weyl relation. Reversing the spatial orientation under $\sigma$ while preserving the metric norm requires $\mathbf{J} \star \mathbf{I} \star \mathbf{J}^{-1} = \mathbf{I}^* = -\mathbf{I}$. Since $\mathbf{J}^{-1} = -\mathbf{J}$, this immediately yields:
\begin{equation}
\mathbf{I} \star \mathbf{J} = -\mathbf{J} \star \mathbf{I}.
\end{equation}
Defining $\mathbf{K} := \mathbf{I} \star \mathbf{J}$, we compute $\mathbf{K}^2 = (\mathbf{I} \star \mathbf{J}) \star (\mathbf{I} \star \mathbf{J}) = \mathbf{I} \star (\mathbf{J} \star \mathbf{I}) \star \mathbf{J} = \mathbf{I} \star (-\mathbf{I} \star \mathbf{J}) \star \mathbf{J} = - \mathbf{I}^2 \star \mathbf{J}^2 = -(-1)(-1) = -1$. Hence, the three orthogonal generators satisfy Hamilton's cyclic quaternion relations pointwise on sequence space.
\end{proof}

\begin{theorem}[Operational Genesis of the Quaternionic Wave Number Space $\mathbb{H}^\mathbb{Z}$]\label{thm:genesis_quaternion_space}
Applying the Cayley--Dickson sequence doubling construction to pairs of complex wave sequences $(\mathbf{\Omega}_1, \mathbf{\Omega}_2) \in \mathbb{C}^\mathbb{Z} \times \mathbb{C}^\mathbb{Z}$ via the spatial generator $\mathbf{J}$ generates the complete quaternionic wave number sequence space $\mathcal{S}_\mathbb{H} = \mathbb{H}^\mathbb{Z}$.
Specifically:
\begin{enumerate}
    \item Any quaternionic wave sequence admits the unique operational decomposition:
    \begin{equation}
    \mathbf{\Omega} = \mathbf{\Omega}_1 + \mathbf{\Omega}_2 \star \mathbf{J}, \quad \mathbf{\Omega}_1, \mathbf{\Omega}_2 \in \mathbb{C}^\mathbb{Z}.
    \end{equation}
    \item The carrier manifold of unimodular quaternionic wave sequences is $(S^3)^\mathbb{Z} \cong (\mathrm{SU}(2))^\mathbb{Z} \cong (\operatorname{Sp}(1))^\mathbb{Z}$, which is a non-abelian associative Lie group under pointwise sequence multiplication.
    \item The sequence space $\mathbb{H}^\mathbb{Z}$ provides the minimal operational carrier for propagating spinor wave states with non-trivial spatial polarization and three emergent spatial dimensions spanned by $\{\mathbf{I}, \mathbf{J}, \mathbf{K}\}$.
\end{enumerate}
\end{theorem}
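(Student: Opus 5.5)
The proof reduces everything to pointwise statements about $\mathbb{H}$, using Lemma~\ref{lem:spatial_anticommutation} to fix the algebraic relations among $\mathbf{I}, \mathbf{J}, \mathbf{K}$. Since $\star$, addition and inversion act pointwise, each of the three claims becomes a statement at a fixed lattice site $m \in \mathbb{Z}$, applied uniformly in $m$.

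\textbf{Step 1: identify the algebra.} At each $m$, the lemma gives $\mathbf{I}^2 = \mathbf{J}^2 = -\mathbf{1}$ and $\mathbf{I}\star\mathbf{J} = -\mathbf{J}\star\mathbf{I} = \mathbf{K}$. The real span of $\{\mathbf{1}, \mathbf{I}, \mathbf{J}, \mathbf{K}\}$ is then closed under $\star$ and satisfies Hamilton's relations. Concretely, I would realize $\mathbf{I} \equiv e_7$ and $\mathbf{J} \equiv e_1$, which gives $\mathbf{K} = e_7e_1 = e_3$ by the Fano triad $(e_7,e_1,e_3)$. This subalgebra is generated by two elements, so it is associative by Artin's Theorem~\ref{thm:artin}. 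It is the Cayley--Dickson double of $\mathbb{C} = \operatorname{span}\{1, e_7\}$ as in Eq.~\eqref{eq:cayley_dickson_multiplication}, hence isomorphic to $\mathbb{H}$.

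\textbf{Step 2: decomposition (item 1).} Given $\mathbf{\Omega}[m] = a_0 + a_1\mathbf{I} + a_2\mathbf{J} + a_3\mathbf{K}$, set
\[
\mathbf{\Omega}_1[m] = a_0 + a_1\mathbf{I}, \qquad \mathbf{\Omega}_2[m] = a_2 + a_3\mathbf{I}.
\]
Then $\mathbf{\Omega}_2\star\mathbf{J} = a_2\mathbf{J} + a_3\mathbf{K}$, which gives existence. For uniqueness, note that $\{1, \mathbf{I}, \mathbf{J}, \mathbf{K}\}$ is linearly independent, so $\mathbb{C} \cap \mathbb{C}\mathbf{J} = 0$ pointwise.

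\textbf{Step 3: group structure (item 2).} Norm multiplicativity $|xy| = |x||y|$ shows the unit sequences are closed under $\star$ and under inversion $x \mapsto x^*$. Associativity comes from Step 1. Non-commutativity is witnessed by the constant sequences $\mathbf{I}$ and $\mathbf{J}$. The identification $S^3 \cong \mathrm{SU}(2) \cong \operatorname{Sp}(1)$ uses the standard map $a + b\mathbf{J} \mapsto \begin{pmatrix} a & b \\ -b^* & a^* \end{pmatrix}$. The product $(S^3)^{\mathbb{Z}}$ is then a group under the pointwise operation; it is a Lie group only in the product sense.

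\textbf{Step 4: item 3, the main obstacle.} This claim is interpretive rather than formal. I would make it precise as a minimality statement: the $\star$-closure of $\mathbf{I}$ together with any unit $\mathbf{J}$ anticommuting with it is already all of $\mathbb{H}$. By Step 1 such a closure contains the three mutually orthogonal imaginary units $\mathbf{I}, \mathbf{J}, \mathbf{K}$, and the imaginary part $\operatorname{Vec}$ carries the adjoint $\mathrm{SO}(3)$ action through the double cover $\mathrm{SU}(2) \to \mathrm{SO}(3)$.

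Minimality follows from Hurwitz (Theorem~\ref{thm:hurwitz}). Any associative division algebra strictly larger than $\mathbb{C}$ and containing a non-commuting pair must be $\mathbb{H}$.

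"Spinor" would be justified by the left action of $S^3$ on $\mathbb{H} \cong \mathbb{C}^2$ through Step 2's decomposition. I would flag this as the step needing the most care to state as a genuine mathematical assertion.
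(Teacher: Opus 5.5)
Your proposal is correct and follows the same route as the paper: reduce everything pointwise to $\mathbb{H}$ via Lemma~\ref{lem:spatial_anticommutation}, then use associativity and norm multiplicativity of $\mathbb{H}$ at each site to obtain the group $(S^3)^{\mathbb{Z}} \cong (\mathrm{SU}(2))^{\mathbb{Z}}$. You also supply details the paper leaves implicit (existence and uniqueness of the decomposition, a witness for non-commutativity, the explicit $\mathrm{SU}(2)$ matrix map, and a precise reading of item~3), and two of your choices are more careful than the paper's proof: your realization $\mathbf{I}=e_7$, $\mathbf{J}=e_1$, $\mathbf{K}=e_3$ lies on a genuine Fano line, whereas the paper appeals to the axes $a\in\{1,2,3\}$, which by its own table ($e_1e_2=e_4$) do not span a quaternion subalgebra, and your caveat that $(S^3)^{\mathbb{Z}}$ is a Lie group only in the product sense correctly flags an overstatement in item~2.
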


\begin{proof}
By Lemma~\ref{lem:spatial_anticommutation}, the generators $\{\mathbf{1}, \mathbf{I}, \mathbf{J}, \mathbf{K}\}$ form a basis of $\mathbb{H}$ pointwise across all lattice points $m \in \mathbb{Z}$. Since $\mathbb{H}$ is associative and normed, pointwise multiplication $(\mathbf{\Omega} \star \mathbf{\Psi})[m] := \mathbf{\Omega}[m] \mathbf{\Psi}[m]$ satisfies associativity $((\mathbf{\Omega} \star \mathbf{\Psi}) \star \mathbf{\Phi}) = (\mathbf{\Omega} \star (\mathbf{\Psi} \star \mathbf{\Phi}))$ and norm multiplicativity $|\mathbf{\Omega} \star \mathbf{\Psi}| = |\mathbf{\Omega}| |\mathbf{\Psi}|$ identically. Unimodular sequences $|\mathbf{\Omega}[m]| = 1$ take values on the 3-sphere $S^3 \subset \mathbb{H}$, which forms the compact Lie group $\mathrm{SU}(2)$. Applying the sequence operator suite $\mathcal{O}$ to the doubled primitives $\{\mathbf{E}_Z^{(e_a)}, \mathbf{E}_{\bar{1}}^{(e_a)}\}_{a \in \{1,2,3\}}$ produces the complete quaternionic wave closure $\mathbb{H}^\mathbb{Z}$.
\end{proof}

\subsection{Operational Genesis of the Octonionic Wave Space $\mathbb{O}^\mathbb{Z}$ and Ladder Termination}\label{subsec:cayley_dickson_genesis}
We now establish the final operational lift from $\mathbb{H}^\mathbb{Z}$ to $\mathbb{O}^\mathbb{Z}$ driven by multi-branch causal network interaction.

\begin{lemma}[Transverse Branch Coupling and Non-Associative Sequence Triads]\label{lem:branch_coupling_nonassociative}
Let $\mathbf{\Omega}_A, \mathbf{\Omega}_B, \mathbf{\Omega}_C, \mathbf{\Omega}_D \in \mathbb{H}^\mathbb{Z}$ be quaternionic wave sequences. Let an independent transverse sequence coupling generator $\bm{\ell}$ ($\bm{\ell}^2 = -\mathbf{1}$) link the pairs via the Cayley--Dickson sequence lift:
\begin{equation}
\mathbf{\Omega}_1 = \mathbf{\Omega}_A + \mathbf{\Omega}_B \star \bm{\ell}, \qquad \mathbf{\Omega}_2 = \mathbf{\Omega}_C + \mathbf{\Omega}_D \star \bm{\ell}.
\end{equation}
Then:
\begin{enumerate}
    \item The product of the coupled sequences follows the sequence Cayley--Dickson law:
    \begin{equation}\label{eq:seq_cayley_dickson}
    \mathbf{\Omega}_1 \star \mathbf{\Omega}_2 = (\mathbf{\Omega}_A \star \mathbf{\Omega}_C - \mathbf{\Omega}_D^* \star \mathbf{\Omega}_B) + (\mathbf{\Omega}_D \star \mathbf{\Omega}_A + \mathbf{\Omega}_B \star \mathbf{\Omega}_C^*) \star \bm{\ell}.
    \end{equation}
    \item Because $(\mathbb{H}^\mathbb{Z}, \star)$ is strictly non-commutative ($\mathbf{\Omega}_A \star \mathbf{\Omega}_C \neq \mathbf{\Omega}_C \star \mathbf{\Omega}_A$), the sequence associator:
    \begin{equation}
    [\mathbf{\Omega}_1, \mathbf{\Omega}_2, \mathbf{\Omega}_3] := (\mathbf{\Omega}_1 \star \mathbf{\Omega}_2) \star \mathbf{\Omega}_3 - \mathbf{\Omega}_1 \star (\mathbf{\Omega}_2 \star \mathbf{\Omega}_3)
    \end{equation}
    is strictly non-vanishing for generic triad wave sequences.
\end{enumerate}
\end{lemma}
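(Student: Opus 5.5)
The plan is to reduce both assertions to pointwise octonion identities. Every operation in Section~\ref{subsec:elementary_operators} ($\star$, addition, conjugation) acts lattice point by lattice point, so at each $m\in\mathbb{Z}$ I would write $a=\mathbf{\Omega}_A[m]$, $b=\mathbf{\Omega}_B[m]$, $c=\mathbf{\Omega}_C[m]$, $d=\mathbf{\Omega}_D[m]\in\mathbb{H}$ and $\ell=\bm{\ell}[m]$. Here $\bm\ell$ is taken to be the constant sequence equal to the doubling unit $\ell\equiv e_4$ of Section~\ref{subsec:cayley_dickson_procedure}.

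For part (1) I would substitute these values into $\mathbf{\Omega}_1[m]\,\mathbf{\Omega}_2[m]=(a+b\ell)(c+d\ell)$ and apply the Cayley--Dickson rule Eq.~\eqref{eq:cayley_dickson_multiplication} with $A=\mathbb{H}$. This gives $(ac-d^*b)+(da+bc^*)\ell$ at each $m$. Reassembling over $m$ and using that conjugation and $\star$ are pointwise yields Eq.~\eqref{eq:seq_cayley_dickson}. The only care needed is to keep the factor orders ($d^*b$, $da$, $bc^*$) exactly as in Eq.~\eqref{eq:cayley_dickson_multiplication}, since $\mathbb{H}$ is non-commutative. I expect this bookkeeping of orderings to be the main, though modest, obstacle: a transposed factor would break the later associator computation.

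For part (2) I would first isolate the mechanism with a structural computation. Take $\mathbf{\Omega}_1=a$, $\mathbf{\Omega}_2=b$ in $\mathbb{H}^\mathbb{Z}$ and $\mathbf{\Omega}_3=c\,\ell$. Applying Eq.~\eqref{eq:seq_cayley_dickson} twice gives
\[
(ab)(c\ell)=\bigl(c(ab)\bigr)\ell, \qquad a\bigl(b(c\ell)\bigr)=a\bigl((cb)\ell\bigr)=\bigl((cb)a\bigr)\ell .
\]
Using associativity in $\mathbb{H}$, the pointwise associator is therefore
\[
[a,b,c\ell]=c\,(ab-ba)\,\ell .
\]
This vanishes for $c\neq0$ exactly when $a$ and $b$ commute, which makes precise the claim that non-commutativity of $(\mathbb{H}^\mathbb{Z},\star)$ is the source of non-associativity.

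For a concrete witness I would use constant sequences on a non-Fano triad, say $e_1,e_2,e_3$; note that $(1,2,3)\notin\mathcal{T}$ in Eq.~\eqref{eq:fano_triads}. From the triads we have $e_1e_2=e_4$ and $e_4e_3=-e_6$, while $e_2e_3=e_5$ and $e_1e_5=e_6$. Hence $[e_1,e_2,e_3]=-2e_6\neq0$ at every $m$.

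Finally, for genericity I would use that the associator is trilinear, hence a polynomial map on $\mathbb{O}^3\cong\mathbb{R}^{24}$. It is not identically zero by the witness, so its zero set is a proper real algebraic subvariety with empty interior. The pointwise associator of three sequences therefore fails to vanish at a given site for an open dense set of triples of values, which is the sense of ``generic'' intended in the statement.
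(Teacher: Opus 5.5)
Your proposal is correct and follows the paper's proof. Part (1) is the pointwise evaluation of the Cayley--Dickson rule, and part (2) rests on the same constant witness $e_1,e_2,e_3$, giving $[e_1,e_2,e_3]=-2e_6$. Your identity $[a,b,c\ell]=c(ab-ba)\ell$ and the nowhere-density argument for the zero set of the trilinear associator are valid additions the paper does not supply; they are what actually justify the ``because non-commutative'' and ``generic'' clauses, which the paper's single witness leaves unargued.
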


\begin{proof}
Eq.~\eqref{eq:seq_cayley_dickson} follows directly from evaluating the Cayley--Dickson multiplication law (Eq.~\eqref{eq:cayley_dickson_multiplication}) pointwise at each lattice site $m \in \mathbb{Z}$. 
To verify non-associativity on sequence space, choose three constant basis wave sequences $\mathbf{\Omega}_1[m] \equiv e_1$, $\mathbf{\Omega}_2[m] \equiv e_2$, and $\mathbf{\Omega}_3[m] \equiv e_3$ (which do not lie on a common Fano line).
Using the Fano plane multiplication rules:
\begin{equation}
(\mathbf{\Omega}_1 \star \mathbf{\Omega}_2) \star \mathbf{\Omega}_3 = (e_1 e_2) e_3 = e_4 e_3 = -e_6,
\end{equation}
whereas:
\begin{equation}
\mathbf{\Omega}_1 \star (\mathbf{\Omega}_2 \star \mathbf{\Omega}_3) = e_1 (e_2 e_3) = e_1 e_5 = +e_6.
\end{equation}
Thus, $[e_1, e_2, e_3] = -e_6 - (+e_6) = -2 e_6 \neq 0$. The sequence associator does not vanish, establishing that the sequence space is non-associative.
\end{proof}

\begin{theorem}[Operational Genesis of $\mathbb{O}^\mathbb{Z}$ and Termination of the Sequence Ladder]\label{thm:genesis_octonion_termination}
Let the sequence operator suite $\mathcal{O} = \{\star, (\cdot)^{-1}, (\cdot)^{1/n}, +, \mathcal{I}\}$ act on the doubled quaternionic wave sequences via the transverse coupling generator $\bm{\ell}$.
Then:
\begin{enumerate}
    \item The 14 primitive coordinate sequences $\{\mathbf{E}_Z^{(e_a)}, \mathbf{E}_{\bar{1}}^{(e_a)}\}_{a=1}^7$ along the 7 imaginary axes of the Fano plane form an irredundant generating basis for the full discrete octonionic wave sequence space $\mathcal{S}_\mathbb{O} = \mathbb{O}^\mathbb{Z}$.
    \item The resulting octonionic wave closure space:
    \begin{equation}
    \mathcal{M}_\mathbb{O} := \operatorname{cl}\Big( \{\mathbf{E}_Z^{(e_a)}, \mathbf{E}_{\bar{1}}^{(e_a)}\}_{a=1}^7 ; \; \star, \; (\cdot)^{-1}, \; \{(\cdot)^{1/n}\}_{n=1}^\infty \Big)
    \end{equation}
    is an infinite divisible Moufang loop satisfying the Moufang identities pointwise on $\mathbb{Z}$.
    \item Any attempt to extend the sequence doubling to the sedenionic sequence space $\mathbb{S}^\mathbb{Z}$ introduces zero-divisor sequences $\mathbf{\Omega}_1, \mathbf{\Omega}_2 \in \mathbb{S}^\mathbb{Z} \setminus \{\mathbf{0}\}$ such that $\mathbf{\Omega}_1 \star \mathbf{\Omega}_2 = \mathbf{0}$. These zero-divisor sequences destroy norm multiplicativity ($\|\mathbf{\Omega}_1 \star \mathbf{\Omega}_2\| \neq \|\mathbf{\Omega}_1\| \|\mathbf{\Omega}_2\|$), violate state invertibility, and induce catastrophic information loss in physical wave propagation.
    \item Consequently, by Hurwitz's Theorem, the admissible division sequence of discrete wave number spaces terminates definitively at $\mathbb{O}^\mathbb{Z}$.
\end{enumerate}
\end{theorem}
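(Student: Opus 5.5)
The proof splits into the four numbered claims, and all of them reduce to pointwise statements at each lattice site $m\in\mathbb{Z}$, via the unwrapped phase lifting of Section~\ref{subsec:sequence_space_s7}. The lifting matters because, read literally, every primitive satisfies $\mathbf{E}_Z[m]=\mathbf{E}_{\bar 1}[m]\equiv 1$. So I will work throughout with the lifted affine phases: $\bm{\Theta}[m]=m\,e_a$ for $\mathbf{E}_Z^{(e_a)}$ and $\bm{\Theta}[m]=e_a$ for $\mathbf{E}_{\bar 1}^{(e_a)}$. I only apply the evaluation map $\mathcal{E}$ after roots have been taken.

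\textbf{Step 1 (generation and irredundancy).}
\begin{itemize}
\item Roots of the lifted primitives produce, for each axis $a$, the rational wave sequences $\exp(2\pi e_a (m+k)/n)$. This is the complex closure $\mathcal{G}_\mathbb{C}\cong(\mathbb{Q}/\mathbb{Z})^2$ of Section~\ref{subsec:genesis_complex_sequence}, now living in the plane $\operatorname{span}\{1,e_a\}$.
\item These sequences take values dense in each circle $\operatorname{span}\{1,e_a\}\cap S^7$. With pointwise addition, real scalar sequences and the axis sequences $x_a[m]e_a$ are available. Summing over $a=0,\dots,7$ then recovers any $\mathbf{\Omega}=x_0+\sum_a x_a e_a\in\mathbb{O}^\mathbb{Z}$. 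This uses the unique decomposition of Section~\ref{sec:octonions_algebra}, or equivalently Theorem~\ref{thm:genesis_quaternion_space} iterated through the doubling of Lemma~\ref{lem:branch_coupling_nonassociative}.
\item Irredundancy cannot mean that one axis is unobtainable from the others under $\star$, since $e_1e_2=e_4$. I will therefore prove it in the linear sense: the seven axis directions are pairwise orthogonal imaginary units. Hence no $\mathbf{E}^{(e_a)}$ lies in the real span of the remaining ones, and in particular the two primitives on axis $a$ are not generated by the phases along the other six axes without using products.
\end{itemize}

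\textbf{Step 2 (the Moufang loop).} The closure $\mathcal{M}_\mathbb{O}$ consists of unimodular sequences, since norm multiplicativity gives $|\mathbf{\Omega}\star\mathbf{\Psi}|=1$ pointwise. So $\mathcal{M}_\mathbb{O}\subset (S^7)^\mathbb{Z}$.
\begin{itemize}
\item Closure under $\star$ holds by construction, and $\mathbf{1}$ lies in the closure.
\item Closure under inverses holds because $\mathbf{\Omega}^{-1}=\mathbf{\Omega}^*$ pointwise, and two-sided inverses exist in $\mathbb{O}$.
\item The Moufang identities \eqref{eq:moufang1}--\eqref{eq:moufang4} are equational, so they hold in the product loop $(S^7)^\mathbb{Z}$ coordinatewise. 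They are therefore inherited by any subloop.
\item Unique solvability of $\mathbf{a}\star\mathbf{x}=\mathbf{b}$ follows from $x=a^{-1}b$, which is valid by Artin's Theorem~\ref{thm:artin} since only two elements are involved.
\item The loop is infinite because each single-axis slice already contains a copy of $\mathbb{Q}/\mathbb{Z}$.
\end{itemize}

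\textbf{Main obstacle: divisibility.} The root operation is only defined on single-axis sequences, whereas elements of $\mathcal{M}_\mathbb{O}$ are products across different axes. My plan is to write each value in polar form $\mathbf{\Omega}[m]=\exp(\hat{\mathbf n}[m]\theta[m])$ (Section~\ref{subsec:unit_7sphere}) and define $\mathbf{\Omega}^{1/n}[m]:=\exp(\hat{\mathbf n}[m](\theta[m]+2\pi k)/n)$. By Artin's theorem this root lies in the associative complex plane $\operatorname{span}\{1,\hat{\mathbf n}[m]\}$, so $(\mathbf{\Omega}^{1/n})^n=\mathbf{\Omega}$ holds unambiguously, because power-associativity holds in that plane.

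The delicate point is showing that this extended root stays inside the closure, rather than merely inside $(S^7)^\mathbb{Z}$. If that proves hard, I would fall back on a weaker reading: define $\mathcal{M}_\mathbb{O}$ as the smallest subloop of $(S^7)^\mathbb{Z}$ closed under these polar roots. That subloop exists as an intersection of such subloops, and it is divisible by definition.

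\textbf{Steps 3 and 4 (termination).} For claim 3, I take constant sequences $\mathbf{\Omega}_1[m]\equiv e_3+e_{10}$ and $\mathbf{\Omega}_2[m]\equiv e_6-e_{15}$ in $\mathbb{S}^\mathbb{Z}$. The computation of Section~\ref{subsec:sedenions_hurwitz} gives $\mathbf{\Omega}_1\star\mathbf{\Omega}_2=\mathbf{0}$, while both factors have pointwise norm $\sqrt2$. This makes norm multiplicativity and invertibility fail, since $\mathbf{\Omega}_1^{-1}$ would force $\mathbf{\Omega}_2=\mathbf{0}$. Claim 4 then follows from Hurwitz's Theorem~\ref{thm:hurwitz}: a sequence algebra is a pointwise normed division algebra only if its value algebra is one. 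So the ladder $\mathbb{C}^\mathbb{Z}\subset\mathbb{H}^\mathbb{Z}\subset\mathbb{O}^\mathbb{Z}$ cannot be continued while keeping that property.
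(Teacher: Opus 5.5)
\textbf{Gap 1: Step 1 does not prove claim 1.} The paper's own proof never addresses claim 1; for claims 2--4 it uses the same pointwise argument as yours. Your sentence that, with pointwise addition, ``real scalar sequences and the axis sequences $x_a[m]e_a$ are available'' is unsupported for arbitrary $x_a\in\mathbb{R}^\mathbb{Z}$. Density of the attained values $\exp(2\pi e_a r)$, $r\in\mathbb{Q}$, in each circle only gives approximations, and the operator suite contains no limit. Indeed, suppose the branch index $k$ is constant in $m$ (as in the paper's root formula) and $+$ uses, say, rational coefficients. Then the closure of fourteen sequences under countably many finitary operations is countable, so it cannot be $\mathbb{O}^\mathbb{Z}$. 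What you can prove is a pointwise statement. The sequence $\mathbf{1}$ and the constant units $e_a=(\mathbf{E}_{\bar 1}^{(e_a)})^{1/4}$ (branch $k=0$) lie in the closure, and $\{1,e_1,\dots,e_7\}$ is a basis of $\mathbb{O}^\mathbb{Z}$ as a free module over $\mathbb{R}^\mathbb{Z}$. But multiplication by an arbitrary real sequence is not a listed operation, so you must say this is the sense of ``generating'' you prove. Likewise, your remark $e_1e_2=e_4$ shows irredundancy under the stated suite is false, not merely awkward. Inside the associative subalgebra $\operatorname{span}\{1,e_1,e_2,e_4\}$ one has $\exp(\tfrac{\pi}{4}e_1)\star\exp(\theta e_2)\star\exp(-\tfrac{\pi}{4}e_1)=\exp(\theta e_4)$, and $\exp(\tfrac{\pi}{4}e_1)$ is an eighth root of $\mathbf{E}_{\bar 1}^{(e_1)}$. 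So the entire root families of both $e_4$-primitives are generated by the $e_1$- and $e_2$-primitives. Linear independence of the lifted phases is a different statement; present it as a replacement for claim 1, not a proof of it.

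\textbf{Gap 2: your primary route to divisibility fails.} That route was to show the polar root stays in the closure generated with roots of single-axis sequences only. In that closure every coordinate at every site lies in the real subfield of $\mathbb{Q}^{\mathrm{ab}}$, because root outputs have coordinates $\cos 2\pi r,\ \sin 2\pi r$ with $r\in\mathbb{Q}$ and the octonionic structure constants are integers. Now take the constant $u=\exp(\tfrac{2\pi}{3}e_1)\star\exp(\tfrac{2\pi}{3}e_2)$, which has $\operatorname{Re}u=\tfrac14$. Any cube root $v$ lies in $\operatorname{span}\{1,u\}$, so $\operatorname{Re}v$ is a root of $16c^3-12c-1$. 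This cubic is irreducible with discriminant $2^8\cdot 3^4\cdot 5$, not a square, so its Galois group is $S_3$ and $\operatorname{Re}v\notin\mathbb{Q}^{\mathrm{ab}}$. Hence claim 2 holds only in your fallback reading, where $(\cdot)^{1/n}$ acts on every element through its polar (re-lifted) form and divisibility is true by construction. The paper's appeal to ``uninhibited multi-branch root extraction in the lifted phase space'' silently needs this reading, so adopt it as the definition from the outset.

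\textbf{A smaller error in Step 3.} The inference ``$\mathbf{\Omega}_1^{-1}$ would force $\mathbf{\Omega}_2=\mathbf{0}$'' is wrong. Since $x=e_3+e_{10}$ satisfies $x^2=-2$, the inverse $x^{-1}=-x/2$ exists two-sidedly in $\mathbb{S}$. What fails is the cancellation $x^{-1}(xy)=y$, because $\mathbb{S}$ is not alternative. The correct statement is that $\mathbf{\Psi}\mapsto\mathbf{\Omega}_1\star\mathbf{\Psi}$ is non-injective at every site. This pointwise failure, together with $|xy|\neq|x||y|$, is what separates $\mathbb{S}^\mathbb{Z}$ from $\mathbb{O}^\mathbb{Z}$, where disjointly supported sequences are already zero divisors. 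Also verify the pair in the standard binary-indexed Cayley--Dickson basis, where it is indeed a zero divisor. The intermediate products in Section~\ref{subsec:sedenions_hurwitz} contradict the paper's own Fano table: for example, that section uses $e_3e_6=e_5$, whereas the table gives $e_3e_6=-e_4$.
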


\begin{proof}
Pointwise evaluation at any lattice coordinate $m \in \mathbb{Z}$ yields elements of the unit sphere $S^7 \subset \mathbb{O}$. Since $S^7$ is closed under octonionic product and inversion, and satisfies the Moufang identities identically by Eq.~\eqref{eq:moufang1}--\eqref{eq:moufang4}, the pointwise sequence algebra inherits the Moufang loop axioms directly. Divisibility follows from uninhibited multi-branch root extraction $(\cdot)^{1/n}$ in the lifted phase space for all $n \in \mathbb{N}$.

The termination at $\mathbb{O}^\mathbb{Z}$ follows from Section~\ref{subsec:sedenions_hurwitz}: in $\mathbb{S}^\mathbb{Z}$, the existence of non-trivial zero divisors $x = e_3 + e_{10}$ and $y = e_6 - e_{15}$ produces constant non-zero wave sequences $\mathbf{\Omega}_x[m] \equiv x$ and $\mathbf{\Omega}_y[m] \equiv y$ whose pointwise product is identically zero: $(\mathbf{\Omega}_x \star \mathbf{\Omega}_y)[m] = x y \equiv 0$. This destroys the uniqueness of state evolution and violates energy/probability conservation. Therefore, physical sequence closures halt strictly at $\mathbb{O}^\mathbb{Z}$.
\end{proof}

\subsection{Octonionic Primitive Coordinate Generators}\label{subsec:primitive_generators}
\begin{definition}[Octonionic Primitive Coordinate Generators]
To span all directional degrees of freedom on the discrete lattice $\mathbb{Z}$, we define seven primitive coordinate sequences, one along each imaginary axis of the Fano plane:
\begin{equation}
\mathbf{E}_Z^{(e_a)}[m] := \exp(2\pi e_a m) \equiv 1, \quad a \in \{1, 2, \dots, 7\}, \quad \forall m \in \mathbb{Z},
\end{equation}
together with seven static unimodular phase calibration primitives:
\begin{equation}
\mathbf{E}_{\bar{1}}^{(e_a)}[m] := \exp(2\pi e_a) \equiv 1, \quad a \in \{1, 2, \dots, 7\}, \quad \forall m \in \mathbb{Z}.
\end{equation}
\end{definition}

\subsection{Real-Valued Spatial Sequence Inner Product on $\mathcal{S}_\mathbb{O}$}\label{subsec:inner_product}
\begin{definition}[Real-Valued Spatial Sequence Inner Product on $\mathcal{S}_\mathbb{O}$]\label{def:inner_product}
For any two octonionic wave sequences $\mathbf{\Omega}_1, \mathbf{\Omega}_2 \in \mathcal{S}_\mathbb{O}$, their mean spatial inner product is defined by taking the spatial average of the real scalar component of their pointwise product:
\begin{equation}
\langle \mathbf{\Omega}_1, \mathbf{\Omega}_2 \rangle := \lim_{N \to \infty} \frac{1}{2N+1} \sum_{m=-N}^N \operatorname{Re}\left( \mathbf{\Omega}_1[m] \cdot \mathbf{\Omega}_2[m]^* \right) \in \mathbb{R}.
\end{equation}
\end{definition}

\begin{proposition}[Properties of the Sequence Inner Product]\label{prop:inner_product_properties}
The real inner product $\langle \cdot, \cdot \rangle$ satisfies:
\begin{enumerate}
    \item \textbf{Real Bilinearity and Symmetry:} $\langle \mathbf{\Omega}_1, \mathbf{\Omega}_2 \rangle = \langle \mathbf{\Omega}_2, \mathbf{\Omega}_1 \rangle$, and it is strictly linear over $\mathbb{R}$.
    \item \textbf{Positive Definiteness:} $\langle \mathbf{\Omega}, \mathbf{\Omega} \rangle \ge 0$, with equality holding if and only if the mean spatial intensity of $\mathbf{\Omega}$ vanishes.
    \item \textbf{Orthonormality of Fano Primitives:} The seven primitive coordinate sequences $\{\mathbf{E}_Z^{(e_a)}\}_{a=1}^7$ along the imaginary axes of the Fano plane form an orthonormal system:
    \begin{equation}
    \langle \mathbf{E}_Z^{(e_a)}, \mathbf{E}_Z^{(e_b)} \rangle = \delta_{ab}, \quad \forall a, b \in \{1, \dots, 7\}.
    \end{equation}
    \item \textbf{Harmonic Orthogonality:} Rational plane waves with distinct rational frequency vectors or orthogonal polarization orientations are mutually orthogonal: $\langle \mathbf{w}_{\mathbf{f}}, \mathbf{w}_{\mathbf{g}} \rangle = \delta_{\mathbf{f}, \mathbf{g}}$.
    \item \textbf{Bypassing the Jordan--von Neumann--Wigner Obstruction:} By mapping sequence pairs to the real scalars $\mathbb{R} = \operatorname{Sc}(\mathbb{O})$, this inner product circumvents the classic non-associative Hilbert space limitation, providing an unambiguous metric for quantum transition probabilities $P(\mathbf{\Omega}_1 \to \mathbf{\Omega}_2) = |\langle \mathbf{\Omega}_1, \mathbf{\Omega}_2 \rangle|^2$ that is manifestly invariant under $G_2 = \operatorname{Aut}(\mathbb{O})$.
\end{enumerate}
\end{proposition}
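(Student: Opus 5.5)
The plan is to reduce every item to one pointwise identity and then pass to spatial averages. For $x,y\in\mathbb{O}$, expanding in the canonical basis and using the Fano multiplication rules shows that $\operatorname{Re}(xy^*)=\sum_{\mu=0}^7 x_\mu y_\mu$. The cross terms $x_a y_b\, e_a e_b^*$ with $a\neq b$ are pure imaginary because $e_ae_b=\sum_c\epsilon_{abc}e_c$, and each diagonal term contributes $x_a y_a\, e_a e_a^*=x_a y_a$. So $\operatorname{Re}(xy^*)$ is the standard Euclidean inner product on $\mathbb{R}^8$. Only the two-element product $xy^*$ occurs, so no associativity question arises (Artin's Theorem~\ref{thm:artin} would cover it in any case).

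The limit in Definition~\ref{def:inner_product} does not exist for arbitrary sequences. I would therefore restrict to the real subspace on which all the relevant Cesàro limits exist. This subspace contains the periodic rational sequences, where the limit equals the finite average over one period, and the almost-periodic sequences built from $\exp(2\pi e_a f m)$. On that subspace item~1 follows from linearity of limits and the symmetry $\operatorname{Re}(xy^*)=\operatorname{Re}(yx^*)$. For item~2, $\langle\Omega,\Omega\rangle$ is the average of $|\Omega[m]|^2\ge 0$. Vanishing then means precisely zero mean intensity, so the statement is really positive semi-definiteness, and definiteness holds after quotienting by null sequences. I would state that explicitly rather than claim more.

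For item~3 I must interpret the primitives. Literally, each $\mathbf{E}_Z^{(e_a)}[m]=\exp(2\pi e_a m)\equiv 1$, which would give $\langle\cdot,\cdot\rangle\equiv 1$ for every pair $a,b$. I would therefore read the statement through the lifted phase space: the primitive along axis $a$ is represented by its unit generator, the constant sequence $e_a$ (equivalently the quarter-turn $\exp(2\pi e_a\cdot\tfrac14)=e_a$). Then the pointwise identity gives $\operatorname{Re}(e_ae_b^*)=\delta_{ab}$ at every site, so the average is $\delta_{ab}$. I expect settling this interpretation to be the main obstacle, since it is where the statement as literally written fails.

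For item~4 I would take $\mathbf{w}_{f}^{(a)}[m]=\exp(2\pi e_a f m)$ and handle two cases.
\begin{itemize}
\item \textbf{Same axis.} The product lies in $\operatorname{span}\{1,e_a\}\cong\mathbb{C}$, so $\operatorname{Re}=\cos(2\pi(f-g)m)$. Its average is $1$ if $f\equiv g \pmod 1$, and $0$ otherwise, by a geometric-sum bound or the root-of-unity sum for rational $f-g$.
\item \textbf{Distinct axes $a\neq b$.} The real part equals $\cos(2\pi fm)\cos(2\pi gm)$, which does not average to zero in general: $f=g=0$ gives $1$. Orthogonality therefore needs the extra hypothesis that the polarizations are transverse, for example comparing the pure-imaginary parts or requiring $f,g\notin\{0,\tfrac12\}$ with $f\neq\pm g$. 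Under such a hypothesis the product-to-sum formula again reduces the claim to averages of cosines.
\end{itemize}

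Item~5 uses the fact that every $\phi\in G_2=\operatorname{Aut}(\mathbb{O})$ fixes $1$, commutes with conjugation and preserves the norm. Hence $\operatorname{Re}(\phi(x)\phi(y)^*)=\operatorname{Re}(\phi(xy^*))=\operatorname{Re}(xy^*)$ pointwise, and averaging preserves the equality. Since the form takes values in $\mathbb{R}$ and is built from two-element products only, no Jordan--von Neumann--Wigner obstruction enters.
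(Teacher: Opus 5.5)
The paper states this proposition without any proof; the text passes directly to Corollary~\ref{cor:principal_sequence_inner_product}, so there is no argument to compare yours against. On its own terms, your reduction to the pointwise identity $\operatorname{Re}(xy^*)=\sum_{\mu}x_\mu y_\mu$ is the right approach, and your diagnoses are correct:
\begin{enumerate}
\item Item~2 is only semi-definiteness. The ``if and only if'' is a tautology, since $\langle\mathbf{\Omega},\mathbf{\Omega}\rangle$ \emph{is} the mean intensity.
\item Item~3 is false as written, because all seven $\mathbf{E}_Z^{(e_a)}$ equal the constant sequence $\mathbf{1}$. Replacing them by the constant sequences $e_a$, i.e.\ the principal branch of $(\mathbf{E}_{\bar 1}^{(e_a)})^{1/4}$, is the natural repair. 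The harmonics $\exp(2\pi e_a m/n)$ with $n\ge 3$ have mutual inner products $\tfrac12$, so they would not work.
\item Item~5 is fine. Note that the invariance holds for every orthogonal map of $\mathbb{R}^8$ applied pointwise, so nothing specifically octonionic is being used.
\end{enumerate}

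Two points need tightening. First, ``the subspace on which all the relevant Ces\`aro limits exist'' is not well defined: the set of sequences whose mean intensity converges is not closed under addition. Fix a subspace explicitly. One choice is the octonion-valued sequences whose eight real components are finite combinations of $\cos(2\pi\lambda m)$ and $\sin(2\pi\lambda m)$, $\lambda\in\mathbb{R}$. This space contains all periodic sequences and all $\mathbf{w}_{\mathbf f}$. All pairwise limits exist on it, because over $-N\le m\le N$ the mean of $\sin(2\pi\lambda m)$ is $0$ and the mean of $\cos(2\pi\lambda m)$ tends to $\chi(\lambda)$. Here $\chi(\lambda)=1$ for $\lambda\in\mathbb{Z}$ and $0$ otherwise.

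Second, item~4 needs repair.
\begin{enumerate}
\item Your counterexample $f=g=0$ on distinct axes is degenerate. Both frequency vectors are then $\mathbf 0$, so $\delta_{\mathbf f,\mathbf g}=1$ and the statement is not contradicted. A genuine counterexample is $f=g=\tfrac14$ on $e_1,e_2$, which gives the mean of $\cos^2(\pi m/2)$, namely $\tfrac12$.
\item Your suggested hypothesis is imprecise. The exact condition for distinct axes is $f\not\equiv\pm g\pmod 1$. The condition ``$f\neq\pm g$'' without reduction mod~$1$ is insufficient: $f=\tfrac54$, $g=\tfrac14$ again gives $\tfrac12$.
\item More importantly, the statement concerns general $\mathbf{w}_{\mathbf f}$ with $\mathbf f\in\mathbb{Q}^7$, not only axis-aligned waves.
\end{enumerate}

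For the general case, write $\mathbf{w}_{\mathbf f}[m]=\cos(2\pi\|\mathbf f\|m)+\hat{\mathbf f}\sin(2\pi\|\mathbf f\|m)$. For $\mathbf f,\mathbf g\neq\mathbf 0$, the same product-to-sum computation yields
\[
\langle\mathbf{w}_{\mathbf f},\mathbf{w}_{\mathbf g}\rangle=\tfrac{1+c}{2}\,\chi\bigl(\|\mathbf f\|-\|\mathbf g\|\bigr)+\tfrac{1-c}{2}\,\chi\bigl(\|\mathbf f\|+\|\mathbf g\|\bigr),\qquad c=\hat{\mathbf f}\cdot\hat{\mathbf g}.
\]
So the relevant data are $\|\mathbf f\|$ modulo $1$ and the direction $\hat{\mathbf f}$, not $\mathbf f$ modulo $\mathbb{Z}^7$. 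For example, $\mathbf f=(\tfrac35,\tfrac45,0,\dots,0)$ has $\|\mathbf f\|=1$, so $\mathbf{w}_{\mathbf f}\equiv\mathbf 1=\mathbf{w}_{\mathbf 0}$. Also, $\|\mathbf f\|$ can be irrational for rational $\mathbf f$ (take $(\tfrac12,\tfrac12,0,\dots,0)$), so ``rational plane waves'' need not be periodic. This is why you need the almost-periodic subspace rather than periodic sequences alone.
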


\begin{corollary}[Reduction to Finite Principal Sequences for Rational Waves]\label{cor:principal_sequence_inner_product}
Let $\mathbf{\Omega}_1, \mathbf{\Omega}_2 \in \mathcal{S}_\mathbb{O}$ be periodic rational wave sequences with periods dividing $Q_1, Q_2 \in \mathbb{N}$, and let $Q = \operatorname{lcm}(Q_1, Q_2)$ denote their joint period. Then, the infinite spatial limit in Definition~\ref{def:inner_product} converges identically to the finite arithmetic average over the principal sequence (the fundamental cycle $\mathbb{Z}/Q\mathbb{Z}$):
\begin{equation}\label{eq:principal_inner_product}
\langle \mathbf{\Omega}_1, \mathbf{\Omega}_2 \rangle = \frac{1}{Q} \sum_{m=0}^{Q-1} \operatorname{Re}\left( \mathbf{\Omega}_1[m] \cdot \mathbf{\Omega}_2[m]^* \right).
\end{equation}
The exact same finite reduction holds for the quaternionic wave numbers $\mathbb{H}^\mathbb{Z}$ (over $\mathbb{H}$) and complex wave numbers $\mathbb{C}^\mathbb{Z}$ (over $\mathbb{C}$). Consequently, for all physical rational wave number states, the inner product is a strictly constructive, finite rational sum that requires no infinite limiting processes or continuous measures.
\end{corollary}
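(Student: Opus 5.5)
The plan is to reduce the statement to a Cesàro-mean fact about a scalar periodic sequence. Set
\begin{equation*}
g[m] := \operatorname{Re}\left( \mathbf{\Omega}_1[m] \cdot \mathbf{\Omega}_2[m]^* \right) \in \mathbb{R}.
\end{equation*}
The product involves only two octonions at each site, so Artin's Theorem (Theorem~\ref{thm:artin}) means no parenthesization ambiguity arises. Hence $g$ is a well-defined real sequence. Since $Q_1 \mid Q$ and $Q_2 \mid Q$, both $\mathbf{\Omega}_1[m+Q] = \mathbf{\Omega}_1[m]$ and $\mathbf{\Omega}_2[m+Q] = \mathbf{\Omega}_2[m]$ hold, and conjugation acts pointwise. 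It follows that $g$ is $Q$-periodic. The statement then becomes purely about real $Q$-periodic sequences: the symmetric averages $\frac{1}{2N+1}\sum_{m=-N}^{N} g[m]$ converge to $\bar g := \frac{1}{Q}\sum_{m=0}^{Q-1} g[m]$.

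For this I would write $2N+1 = qQ + r$ with $0 \le r < Q$. I then split the window $\{-N,\dots,N\}$ into $q$ consecutive blocks of length $Q$, followed by a remainder of $r$ terms. By periodicity, each full block is a complete residue system mod $Q$, so each sums to $Q\bar g$. The remainder is bounded by $r M \le QM$, where
\begin{equation*}
M := \max_{0\le m<Q} |g[m]|.
\end{equation*}
This gives
\begin{equation*}
\left| \frac{1}{2N+1}\sum_{m=-N}^{N} g[m] - \bar g \right| \le \frac{qQ|\bar g| - (2N+1)|\bar g| \cdot 0 + r(M+|\bar g|)}{2N+1} \le \frac{Q(M+|\bar g|)}{2N+1} \to 0,
\end{equation*}
after noting that $qQ\bar g = (2N+1)\bar g - r\bar g$. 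So the limit in Definition~\ref{def:inner_product} exists and equals Eq.~\eqref{eq:principal_inner_product}. The sum can start at $m=0$ because any $Q$ consecutive integers give the same block sum.

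For $\mathbb{H}^\mathbb{Z}$ and $\mathbb{C}^\mathbb{Z}$ the same argument applies verbatim. These are subalgebras of $\mathbb{O}$ closed under conjugation, and there $\operatorname{Re}$ of $xy^*$ is the usual real part. The finiteness and constructivity remark then follows. Whenever the sequence values have rational (or algebraic, for roots of unity) components, $\bar g$ is a finite sum of finitely many explicit values divided by $Q$.

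I do not expect any real obstacle. The only point needing care is the bookkeeping of the boundary remainder in the symmetric window, namely that the error is $O(Q/N)$ uniformly. A second minor point is making explicit that ``period dividing $Q_i$'' gives $Q$-periodicity of the product. Everything else is routine.
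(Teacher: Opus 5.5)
Your argument is correct. The paper gives no proof of this corollary: it is stated directly after Proposition~\ref{prop:inner_product_properties} and the next section begins, and the introduction only points back to the Spatial Sequence Inner Product Theorem of the earlier closure paper. So there is no competing route to compare against. Your reduction to the real $Q$-periodic sequence $g$, followed by the block decomposition of the symmetric window, is exactly the omitted argument. It also proves something the paper tacitly assumes, namely that the limit in Definition~\ref{def:inner_product} exists for periodic sequences, at rate $O(1/N)$.

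One slip needs fixing, and it sits in the bookkeeping you flagged. As written, the middle expression of your display equals $\bigl(qQ|\bar g| + r(M+|\bar g|)\bigr)/(2N+1)$. This tends to $|\bar g|$, so the second inequality fails whenever $\bar g \neq 0$. The cancellation $qQ\bar g = (2N+1)\bar g - r\bar g$ must be made before taking absolute values. Write $R$ for the sum of the $r$ leftover terms, so $|R| \le rM$. Then
\[
\sum_{m=-N}^{N} g[m] - (2N+1)\bar g = \bigl(qQ\bar g + R\bigr) - (qQ + r)\bar g = R - r\bar g,
\]
hence
\[
\left| \frac{1}{2N+1}\sum_{m=-N}^{N} g[m] - \bar g \right| \le \frac{r\,(M+|\bar g|)}{2N+1} \le \frac{Q\,(M+|\bar g|)}{2N+1}.
\]

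Two minor remarks. First, Artin's theorem is not needed: a product of two factors involves no bracketing at all. Your proof uses nothing about $\mathbb{O}$ beyond $g$ being real and $Q$-periodic, which is why it transfers verbatim to $\mathbb{H}^\mathbb{Z}$ and $\mathbb{C}^\mathbb{Z}$, and why any common multiple of $Q_1,Q_2$ would serve in place of the lcm.

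Second, your hedge on the final sentence of the statement is warranted. The period average is a finite sum, but its value is in general algebraic, not rational. For the two branches $\exp(2\pi e_1 (m+1)/5)$ and $\exp(2\pi e_1 m/5)$ of the paper's own fifth root, the inner product is $\cos(2\pi/5) = (\sqrt{5}-1)/4$. So ``finite rational sum'' can only be read as a finite sum over rational waves.
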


\vspace{1em}
\section{Exceptional Symmetries, the Fano Plane, and Invariant Hexagonal Meshes}
\label{sec:hexagonal}

\subsection{The Automorphism Group $G_2 = \operatorname{Aut}(\mathbb{O})$}
A fundamental property of the octonions is the structure of its automorphism group:
\begin{equation}
\operatorname{Aut}(\mathbb{O}) = \{ \phi \in \mathrm{GL}(8, \mathbb{R}) \mid \phi(x y) = \phi(x) \phi(y), \; \forall x, y \in \mathbb{O} \}.
\end{equation}
While $\operatorname{Aut}(\mathbb{C}) = \mathbb{Z}_2$ (complex conjugation) and $\operatorname{Aut}(\mathbb{H}) \cong \mathrm{SO}(3)$ (spatial rotations via unit quaternion conjugation $q \mapsto u q u^{-1}$), the automorphism group of $\mathbb{O}$ is:
\begin{equation}
\operatorname{Aut}(\mathbb{O}) \cong G_2,
\end{equation}
the smallest of the five exceptional Lie groups, with dimension $\dim(G_2) = 14$ and rank $2$~\cite{baez2002}.

\begin{theorem}[Invariance of the Wave Closure under $G_2$]
For any automorphism $\phi \in G_2$, the induced sequence map $\Phi : \mathcal{S}_\mathbb{O} \to \mathcal{S}_\mathbb{O}$ defined by $\Phi(\mathbf{\Omega})[m] := \phi(\mathbf{\Omega}[m])$ preserves all wave sequence products, inverses, and root extractions:
\begin{equation}
\Phi(\mathbf{\Omega}_1 \star \mathbf{\Omega}_2) = \Phi(\mathbf{\Omega}_1) \star \Phi(\mathbf{\Omega}_2), \qquad \Phi(\mathbf{\Omega}^{-1}) = (\Phi(\mathbf{\Omega}))^{-1}.
\end{equation}
Consequently, the wave closure space $\mathcal{M}_\mathbb{O}$ is invariant under the full 14-dimensional exceptional Lie group $G_2$.
\end{theorem}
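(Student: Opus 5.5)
The argument works pointwise: each sequence operation acts independently at each lattice site $m$, so each identity reduces to a statement about a single automorphism $\phi$ of $\mathbb{O}$.

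\textbf{Products.} For any $m$,
\[
\Phi(\mathbf{\Omega}_1\star\mathbf{\Omega}_2)[m]=\phi(\mathbf{\Omega}_1[m]\mathbf{\Omega}_2[m])=\phi(\mathbf{\Omega}_1[m])\,\phi(\mathbf{\Omega}_2[m]),
\]
which is the defining property of $\phi\in\operatorname{Aut}(\mathbb{O})$. Linearity of $\phi$ likewise gives compatibility with sequence addition.

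\textbf{Inverses.} First I will show $\phi(1)=1$. Indeed $\phi(1)\phi(x)=\phi(x)$ for all $x$, and $\phi$ is bijective, so $\phi(1)$ is a left identity; hence $\phi(1)=1$. Applying $\phi$ to $xx^{-1}=x^{-1}x=1$ then shows that $\phi(x^{-1})$ is a two-sided inverse of $\phi(x)$. Uniqueness of inverses in the division algebra $\mathbb{O}$ gives $\phi(x^{-1})=\phi(x)^{-1}$, and evaluating this pointwise yields $\Phi(\mathbf{\Omega}^{-1})=\Phi(\mathbf{\Omega})^{-1}$.

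\textbf{Conjugation and norm.} I will also use that $\phi$ commutes with conjugation and preserves the norm. The imaginary octonions are characterized algebraically as the elements $x\notin\mathbb{R}$ with $x^2\in\mathbb{R}_{\le 0}$, together with $0$. Since $\phi$ fixes $\mathbb{R}$ and preserves squares, it preserves $\operatorname{Vec}(\mathbb{O})$. Therefore $\phi(x^*)=\phi(x)^*$, and so $|\phi(x)|^2=\phi(xx^*)=|x|^2$. In particular $\phi$ maps $S^7$ to itself.

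\textbf{Roots.} This is the main obstacle, because roots are defined in the lifted phase space rather than directly on $\mathbb{O}$. For a unit imaginary $\hat{\mathbf n}$, I will show
\[
\phi\bigl(\exp(2\pi\hat{\mathbf n}\,t)\bigr)=\exp\bigl(2\pi\,\phi(\hat{\mathbf n})\,t\bigr).
\]
This follows from writing $\exp(2\pi\hat{\mathbf n}\,t)=\cos(2\pi t)+\hat{\mathbf n}\sin(2\pi t)$ and using real-linearity of $\phi$. Since $\phi(\hat{\mathbf n})$ is again a unit imaginary octonion, the map $\Phi$ lifts to the phase space as $\bm{\Theta}\mapsto\phi\circ\bm{\Theta}$, with $\phi$ acting on $\operatorname{Vec}(\mathbb{O})\cong\mathbb{R}^7$. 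This lift is $\mathbb{R}$-linear, so it commutes with division of the affine coordinate by $n$. It also maps each branch index $k$ to the same branch index $k$. Hence $\Phi$ sends the set of $n$-th root branches of $\mathbf{\Omega}$ bijectively onto the set of $n$-th root branches of $\Phi(\mathbf{\Omega})$. A subtle point is that $\phi$ permutes the axes $e_a$, so it does not fix them; consequently the root formula must be read for a general direction $\hat{\mathbf n}$, not only for the coordinate axes.

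\textbf{Invariance of $\mathcal{M}_\mathbb{O}$.} I will argue by induction on the length of the generating word. For the base case, the generators $\mathbf{E}_Z^{(e_a)}$ and $\mathbf{E}_{\bar 1}^{(e_a)}$ evaluate identically to $1$, and $\phi(1)=1$; more precisely, at the level of lifts, each generator is sent to an element of the same form along the direction $\phi(e_a)$. The inductive step uses that $\Phi$ commutes with $\star$, inversion and roots, as shown above. Therefore $\Phi(\mathcal{M}_\mathbb{O})\subseteq\mathcal{M}_\mathbb{O}$, provided the closure is understood as $G_2$-saturated over all imaginary directions. Applying the same argument to $\phi^{-1}$ gives the reverse inclusion, so $\Phi(\mathcal{M}_\mathbb{O})=\mathcal{M}_\mathbb{O}$.
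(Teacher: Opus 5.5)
The paper states this theorem without proof, so the only question is whether your argument establishes it. Your pointwise verification is correct and fully covers the displayed identities: products, inverses (via $\phi(1)=1$ and uniqueness of inverses), conjugation and norm, and $\phi(\exp(2\pi\hat{\mathbf n}t))=\exp(2\pi\phi(\hat{\mathbf n})t)$.

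The gap is in the final step. You located it yourself but did not close it. Root extraction acts on lifts, not on sequences: $\mathbf{E}_Z^{(e_1)}$ and $\mathbf{E}_Z^{(e_2)}$ are the same sequence (identically $1$) yet have different roots. So the fact that $\Phi$ fixes the generators as sequences gives you no base case. On lifts, $\Phi$ sends $m\,e_a$ to $m\,\phi(e_a)$, which is not among the fourteen generating lifts unless $\phi$ maps $\{\pm e_1,\dots,\pm e_7\}$ to itself. Your proviso ``provided the closure is understood as $G_2$-saturated'' replaces $\mathcal{M}_\mathbb{O}$ by a different object whose invariance is built into its definition. The induction therefore proves nothing about $\mathcal{M}_\mathbb{O}$ itself.

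This cannot be repaired, because for $\mathcal{M}_\mathbb{O}$ as defined the invariance claim is false. The closure of fourteen generators under $\star$, inversion and the countably many finitely-branched roots is a countable set. It contains $\mathbf{\Psi}:=(\mathbf{E}_Z^{(e_1)})^{1/4}$ (branch $k=0$), given by $\mathbf{\Psi}[m]=\exp(\pi e_1 m/2)$, so $\mathbf{\Psi}[1]=e_1$. Since $G_2$ acts transitively on $S^6$ (with stabilizer $\mathrm{SU}(3)$), the values $\Phi(\mathbf{\Psi})[1]=\phi(e_1)$ fill all of $S^6$. Hence the $G_2$-orbit of $\mathbf{\Psi}$ is uncountable and cannot lie in $\mathcal{M}_\mathbb{O}$. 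In fact an automorphism is determined by $\phi(e_1),\phi(e_2),\phi(e_3)$, so the stabilizer of $\mathcal{M}_\mathbb{O}$ in $G_2$ is at most countable.

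What your argument does prove is the equivariance $\Phi(\mathcal{M}_\mathbb{O})=\mathcal{M}_\mathbb{O}^{\phi}$. Here $\mathcal{M}_\mathbb{O}^{\phi}$ denotes the closure generated by the primitives along the rotated frame $\{\phi(e_a)\}_{a=1}^7$. Two correct conclusions follow:
\begin{itemize}
\item $\mathcal{M}_\mathbb{O}$ is invariant under the finite subgroup of $G_2$ mapping $\{\pm e_1,\dots,\pm e_7\}$ to itself.
\item The saturated closure generated by primitives along every $\hat{\mathbf n}\in S^6$ is $G_2$-invariant.
\end{itemize}
You should state one of these instead of the conclusion as written in the theorem.
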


\subsection{The Invariant Hexagonal Mesh Theorem and the $\mathbb{C}^\mathbb{Z} \to \mathbb{H}^\mathbb{Z} \to \mathbb{O}^\mathbb{Z}$ Wave Number Ladder}\label{subsec:hexagonal_mesh_theorem}
In \cite{smith2025closure} (Theorem~5.4) and \cite{smith2026rational}, Smith established the \emph{Invariant Hexagonal Mesh Theorem} ($n=6$) embedded in the rational wave number spectrum. In the octonionic setting, this phenomenon receives both an exact Lie-algebraic explanation through the root system of $G_2$ and a dynamical derivation via the Discrete Cumulative Integral Operator $\mathcal{I}$.

\subsubsection{Dynamical Genesis via the Cumulative Integral Bifurcation}
Consider the recursive iteration of the discrete cumulative integral operator $\mathcal{I}$ (Eq.~\eqref{eq:def_cumulative_integral_oct}) acting on a primary rational wave of period $n \ge 3$. In the complex plane $\mathbb{C}$, the cumulative vertices form a closed regular $n$-gon of circumradius $R^{(0)} = R_n = [2\sin(\pi/n)]^{-1}$. If at each subsequent iteration step $t \ge 1$, the defining step edge length $e_t$ is rescaled to match the circumradius of the preceding iteration, $e_t := R^{(t-1)}$, the circumradius sequence satisfies:
\begin{equation}
R^{(t)} = \left( \frac{1}{2 \sin\left( \frac{\pi}{n} \right)} \right)^t R_n.
\end{equation}
The asymptotic behavior exhibits a sharp, universal bifurcation governed exclusively by the regular hexagon ($n=6$):
\begin{enumerate}
    \item \textbf{Sub-Hexagonal Contraction ($n < 6$):} For $n \in \{3, 4, 5\}$, $\sin(\pi/n) > 1/2$, yielding scaling factor $\lambda_n = [2\sin(\pi/n)]^{-1} < 1$. The iterated polygon collapses monotonically to a point: $\lim_{t \to \infty} R^{(t)} = 0$.
    \item \textbf{Hexagonal Scale Invariance ($n = 6$):} For $n = 6$, $\sin(\pi/6) = 1/2$, yielding $\lambda_6 = 1$. The geometric radius and edge length are strictly scale-invariant across all iteration steps: $R^{(t)} \equiv 1$ for all $t \ge 0$.
    \item \textbf{Super-Hexagonal Dilation ($n > 6$):} For $n \ge 7$, $\sin(\pi/n) < 1/2$, yielding $\lambda_n > 1$. The iterated polygon undergoes exponential geometric dilation: $\lim_{t \to \infty} R^{(t)} = +\infty$.
\end{enumerate}
Thus, the regular hexagon ($n=6$) is the unique non-trivial fixed-point attractor of discrete polytopic wave calculus, creating a self-stabilizing metric fabric.

\subsubsection{Lie-Algebraic Structure of $G_2$ Roots and Fano Weights}
In the octonionic algebra, this scale-invariant hexagonal geometry realizes its complete non-abelian symmetry through the root system of the exceptional automorphism group $G_2 = \operatorname{Aut}(\mathbb{O})$.
Recall that the Lie algebra $\mathfrak{g}_2$ of $G_2$ has a 2-dimensional Cartan subalgebra $\mathfrak{h}$. Its root system $\Phi(G_2)$ consists of 12 non-zero root vectors in the Euclidean plane $\mathbb{R}^2$:
\begin{itemize}
    \item 6 short roots of length $\sqrt{2}$, forming a regular hexagon:
    \begin{equation}
    \Phi_{\text{short}} = \left\{ \pm \alpha_1, \; \pm (\alpha_1 + \alpha_2), \; \pm (2\alpha_1 + \alpha_2) \right\};
    \end{equation}
    \item 6 long roots of length $\sqrt{6}$, rotated by $30^\circ = \pi/6$ and forming a concentric regular hexagon:
    \begin{equation}
    \Phi_{\text{long}} = \left\{ \pm \alpha_2, \; \pm (3\alpha_1 + \alpha_2), \; \pm (3\alpha_1 + 2\alpha_2) \right\}.
    \end{equation}
\end{itemize}

\begin{theorem}[Invariant Hexagonal Mesh Theorem across the Wave Number Sequence Ladder $\mathbb{C}^\mathbb{Z} \to \mathbb{H}^\mathbb{Z} \to \mathbb{O}^\mathbb{Z}$]\label{thm:hexagonal}
The scale-invariant hexagonal bifurcation ($n=6$) identified under cumulative integration constitutes the universal geometric thread uniting the entire rational wave number sequence ladder $\mathbb{C}^\mathbb{Z} \to \mathbb{H}^\mathbb{Z} \to \mathbb{O}^\mathbb{Z}$ across the division algebras $\mathbb{C} \to \mathbb{H} \to \mathbb{O}$:
\begin{enumerate}
    \item \textbf{Complex Rational Wave Numbers ($\mathbb{C}^\mathbb{Z}$):} The 6 roots of unity $\mu_6 = \{e^{2\pi i k / 6}\}_{k=0}^5$ generate the Eisenstein integer ring $\mathbb{Z}[\omega]$ ($\omega = e^{2\pi i/3}$). In $\mathbb{R}^2$, its Voronoi partition forms the regular hexagonal honeycomb tiling. By Hales' Honeycomb Conjecture~\cite{hales2001}, this partition achieves the absolute minimum boundary perimeter for any planar tiling and the maximal 2D circle-packing density $\eta_{\mathrm{hex}} = \pi/\sqrt{12} \approx 0.9069$, minimizing interfacial shear energy. Furthermore, tight-binding wave propagation on this $n=6$ mesh yields linear dispersion at the Brillouin zone Dirac points $\mathbf{K}, \mathbf{K}'$: $E(\mathbf{q}) \approx \pm \hbar v_F \|\mathbf{q}\|$, reproducing the massless relativistic 2D Dirac equation $i \gamma^\mu \partial_\mu \psi = 0$ purely from discrete wave mechanics.
    \item \textbf{Quaternionic Rational Wave Numbers ($\mathbb{H}^\mathbb{Z}$):} Lifting the 6 planar hexagonal roots to 4 dimensions via the Hurwitz quaternions $\mathcal{H} = \mathbb{Z}[\mathbf{i}, \mathbf{j}, \mathbf{k}, \frac{1+\mathbf{i}+\mathbf{j}+\mathbf{k}}{2}]$ expands the 6 roots into the 24 unit Hurwitz quaternions. These 24 units form the vertices of the 24-cell (icositetrachoron)---the unique self-dual regular 4-polytope that has no 3-dimensional analogue. The 24-cell tiles 4D space completely. When coordinate advance is identified with temporal flow via the biquaternion tensor $X = x_0 + x_1 \mathbf{i} + x_2 \mathbf{j} + x_3 \mathbf{k}$, the norm determinant $\det(X) = x_0^2 - x_1^2 - x_2^2 - x_3^2$ endows this 4D polytope with Minkowski signature $(1,3)$ and Lorentz symmetry $\mathrm{SO}^+(1,3)$, where the 3 imaginary quaternionic directions $\{\mathbf{i}, \mathbf{j}, \mathbf{k}\}$ govern the constructive emergence of exactly 3 spatial coordinates.
    \item \textbf{Octonionic Rational Wave Numbers ($\mathbb{O}^\mathbb{Z}$):} In the non-associative 8-dimensional octonions, under the action of the maximal torus $T^2 \subset G_2$, the frequency vector decomposes into irreducible representations of the Weyl group:
    \begin{equation}
    \mathcal{W}(G_2) \cong D_6,
    \end{equation}
    where $D_6$ is the dihedral group of symmetries of the regular hexagon (order 12). Under the stabilizer subgroup $\mathrm{SU}(3) \subset G_2$ of a selected longitudinal axis $e_7$, the 6 transverse units decompose into the fundamental $\mathbf{3}$ and anti-fundamental $\bar{\mathbf{3}}$ representations:
    \begin{equation}
    \{e_1, \dots, e_6\} \cong \mathbf{3} \oplus \bar{\mathbf{3}}.
    \end{equation}
    In the 2-dimensional weight diagram, the weights of $\mathbf{3}$ form an equilateral triangle, and the weights of $\bar{\mathbf{3}}$ form the inverted equilateral triangle. Their union forms a regular hexagram, whose convex hull is the invariant regular hexagon. The discrete phase interference pattern exhibits exact 6-fold rotational invariance:
    \begin{equation}
    \mathcal{S}_{\text{spectrum}}(\theta + \pi/3) = \mathcal{S}_{\text{spectrum}}(\theta).
    \end{equation}
\end{enumerate}
\end{theorem}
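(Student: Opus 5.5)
The theorem splits into three layers, one for each rung of the ladder. I would prove each layer separately, keeping the rigorous algebraic core apart from the interpretive content (packing optimality, Dirac dispersion, Minkowski signature). For the interpretive parts I would either cite the literature or state them as consequences of standard results. The common thread is the hexagonal bifurcation of Section~\ref{subsec:hexagonal_mesh_theorem}. Since $\lambda_6 = [2\sin(\pi/6)]^{-1} = 1$, the only scale-invariant polygon under iterated $\mathcal{I}$ is the hexagon. At each rung I would show that the relevant finite symmetry object contains this scale-invariant regular hexagon: the cumulative orbit of $\exp(2\pi e_a m/6)$ inside the plane $\operatorname{span}\{1,e_a\}\cong\mathbb{C}$.

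\emph{Complex rung.} First, $\mu_6$ is the unit group of $\mathbb{Z}[\omega]$. Next, the partial sums of $e^{2\pi i m/6}$ are sums of sixth roots of unity, so they lie in $\mathbb{Z}[\omega]$; they trace the unit hexagon with $R_6=1$. The Voronoi cell of the lattice $\mathbb{Z}[\omega]$ is a regular hexagon, because the six nearest neighbours $\mu_6$ are equidistant at angles $\pi/3$. The optimality claims I would quote from Hales~\cite{hales2001} and from Thue's packing theorem, giving $\eta=\pi/\sqrt{12}$. I would present the Dirac-cone statement as the standard nearest-neighbour tight-binding computation on the honeycomb: expand $|1+e^{i\mathbf{k}\cdot\mathbf{a}_1}+e^{i\mathbf{k}\cdot\mathbf{a}_2}|$ to first order near $\mathbf{K}$.

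\emph{Quaternionic rung.} Embed $\mathbb{C}\hookrightarrow\mathbb{H}$ along any axis $\mathbf{i}$. The sixth roots $\pm1$ and $\pm\tfrac12\pm\tfrac{\sqrt3}{2}\mathbf{i}$ are not themselves Hurwitz units, so I would instead use the units $\pm\tfrac12(1\pm\mathbf{i}\pm\mathbf{j}\pm\mathbf{k})$. The element $\tfrac12(1+\mathbf{i}+\mathbf{j}+\mathbf{k})$ has real part $1/2$, hence order $6$, and its cyclic group is a hexagon in the plane $\operatorname{span}\{1,(\mathbf{i}+\mathbf{j}+\mathbf{k})/\sqrt3\}$. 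Then I would cite the standard facts that the 24 units form the binary tetrahedral group, whose convex hull is the self-dual 24-cell that tiles $\mathbb{R}^4$. The signature statement is just the determinant of the Hermitian $2\times2$ matrix representation, and $\mathrm{SL}(2,\mathbb{C})\to\mathrm{SO}^+(1,3)$ is standard.

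\emph{Octonionic rung.} This is the genuinely Lie-theoretic step. I would use the root data listed above. Here $\mathcal{W}(G_2)$ is generated by the reflections in $\alpha_1$ and $\alpha_2$ at angle $5\pi/6$, so their product has order $6$ and the group is dihedral of order $12$. For $\mathrm{SU}(3)\subset G_2$, I would take the stabilizer of $e_7$ and let $J=L_{e_7}$ be the complex structure on $e_7^\perp\cap\operatorname{Im}\mathbb{O}\cong\mathbb{C}^3$. Automorphisms fixing $e_7$ commute with $J$ and preserve the complex volume form $\operatorname{Re}$ of the $3$-form $\phi(x,y,z)=\langle x,yz\rangle$ restricted there, giving $\mathrm{SU}(3)$. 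Thus the real $6$-space is $\mathbf{3}$, and its complexification is $\mathbf{3}\oplus\bar{\mathbf{3}}$. The weights of $\mathbf{3}$ are three vectors of equal length summing to zero, which makes an equilateral triangle; $\bar{\mathbf{3}}$ gives the negatives. Their union is a hexagram whose convex hull is a regular hexagon, and these six weights coincide with the short roots $\Phi_{\text{short}}$ of $G_2$. That identification gives the weight decomposition $\mathbf{7}=\mathbf{1}\oplus\mathbf{3}\oplus\bar{\mathbf{3}}$. The $\pi/3$-invariance of the spectrum then follows because the spectrum is a function on the weight plane invariant under $\mathcal{W}(G_2)\cong D_6$, which contains rotation by $\pi/3$.

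The main obstacle I expect is making $\mathcal{S}_{\text{spectrum}}$ precise, since it is not defined earlier in the paper. I would first try defining it as the angular distribution of the $T^2$-weights of $\mathbb{O}$, counted with multiplicity. That function is $\mathcal{W}$-invariant and so gives the rotational claim. The rest of the theorem reduces to citing known results.
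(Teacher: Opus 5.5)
Your proposal follows the same route as the paper's proof, which also treats the three rungs separately and reduces each to standard facts: sums of sixth roots of unity in $\mathbb{Z}[\omega]$ plus Hales (you rightly add Thue for $\pi/\sqrt{12}$), Hurwitz-unit arithmetic plus the biquaternion (Hermitian $2\times 2$) model of $\mathrm{SO}^+(1,3)$, and the Cartan/weight decomposition of $\mathfrak{g}_2$ under $\mathrm{SU}(3)$ together with $\mathcal{W}(G_2)\cong D_6$. Yours is that argument written out in much more detail, and your repairs are all sound: using the axis $(\mathbf{i}+\mathbf{j}+\mathbf{k})/\sqrt{3}$ because the coordinate-axis sixth roots are not Hurwitz units, reading $\mathbf{3}\oplus\bar{\mathbf{3}}$ as the complexification of the real $6$-space, and fixing an explicit $\mathcal{W}$-invariant definition of $\mathcal{S}_{\text{spectrum}}$.
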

\begin{proof}
Point (1) follows from Dirichlet summation and Hales' theorem~\cite{hales2001}. Point (2) follows from Hurwitz integer arithmetic and the biquaternion identification with $\mathrm{SO}^+(1,3)$. Point (3) follows from the Cartan decomposition of $\mathfrak{g}_2$ and the Weyl group $\mathcal{W}(G_2) \cong D_6$, which commutes with wave sequence multiplication in $\mathcal{M}_\mathbb{O}$.
\end{proof}

\vspace{1em}

\section{The Non-Associative Phase Jitter and Catalan Tree Topology}
\label{sec:jitter}

\subsection{Parenthesization Trees and the Associator Phase Anomaly}
In an associative wave sequence algebra (such as $\mathcal{S}_\mathbb{C}$ or $\mathcal{S}_\mathbb{H}$), the sequential product of $N$ wave sequences $\mathbf{\Omega}_1 \star \mathbf{\Omega}_2 \star \cdots \star \mathbf{\Omega}_N$ is completely independent of the operational order in which binary products are evaluated:
\begin{equation}
(\mathbf{\Omega}_1 \star \mathbf{\Omega}_2) \star \mathbf{\Omega}_3 = \mathbf{\Omega}_1 \star (\mathbf{\Omega}_2 \star \mathbf{\Omega}_3).
\end{equation}
In the octonionic wave sequence space $\mathcal{S}_\mathbb{O}$, this associative equivalence strictly \textbf{fails} whenever the wave sequences involve imaginary basis directions that do not share a common oriented line of the Fano plane. Consequently, the algebraic composition of three or more wave sequences is tree-dependent: distinct binary parenthesizations (algebraic word trees) generate distinct physical wave trajectories on $\mathcal{S}_\mathbb{O}$.

The number of topologically distinct binary parenthesization trees for a product of $N$ wave sequences is given by the Catalan number:
\begin{equation}
C_{N-1} = \frac{1}{N} \binom{2N-2}{N-1}.
\end{equation}
For $N=3$, $C_2 = 2$: the two binary trees are $((1 \cdot 2) \cdot 3)$ and $(1 \cdot (2 \cdot 3))$. For $N=4$, $C_3 = 5$; for $N=5$, $C_4 = 14$.

\begin{definition}[Parenthesization Discrepancy]
For any binary bracketing tree $T \in \mathcal{T}_N$, let $\prod_T (\mathbf{\Omega}_1, \dots, \mathbf{\Omega}_N)$ denote the wave sequence evaluated according to tree $T$. The parenthesization discrepancy between two trees $T_1, T_2$ is:
\begin{equation}
\Delta_{T_1, T_2}[m] := \prod_{T_1} (\mathbf{\Omega}_1, \dots, \mathbf{\Omega}_N)[m] - \prod_{T_2} (\mathbf{\Omega}_1, \dots, \mathbf{\Omega}_N)[m].
\end{equation}
\end{definition}

\subsection{The Algebraic Phase Jitter Theorem}
\begin{theorem}[Non-Associative Phase Jitter Theorem]
\label{thm:jitter}
Let $\mathbf{\Omega}_a[m] = \exp(2\pi e_a f_a m)$ ($a \in \{1, 2, 3\}$) be three octonionic plane wave sequences whose imaginary axes $(e_1, e_2, e_3)$ do not belong to a common Fano triad (i.e., $(e_1, e_2, e_3) \notin \mathcal{T}$). 

Then, the sequence associator $[\mathbf{\Omega}_1, \mathbf{\Omega}_2, \mathbf{\Omega}_3]$ is strictly non-zero and produces a high-frequency, non-linear phase fluctuation:
\begin{equation}
\delta \phi_{\text{jitter}}[m] := \operatorname{Arg}\left( \big(\mathbf{\Omega}_1[m] \star \mathbf{\Omega}_2[m]\big) \star \mathbf{\Omega}_3[m] \cdot \big(\mathbf{\Omega}_1[m] \star (\mathbf{\Omega}_2[m] \star \mathbf{\Omega}_3[m])\big)^{-1} \right) \neq 0.
\end{equation}
The amplitude of this phase fluctuation is bounded by:
\begin{equation}
0 < \|\delta \phi_{\text{jitter}}[m]\| \le 2\arcsin\left( 2 \prod_{a=1}^3 \sin(2\pi f_a m) \right),
\end{equation}
and exhibits a discrete pseudo-random modulation across the spatial lattice $\mathbb{Z}$.
\end{theorem}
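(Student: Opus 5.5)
The plan is to reduce the whole statement to one exact identity for the associator of unimodular plane waves, using trilinearity of the associator and the fact that it vanishes whenever one argument is real. At a fixed site $m$, set $c_a := \cos(2\pi f_a m)$ and $s_a := \sin(2\pi f_a m)$, so that $\mathbf{\Omega}_a[m] = c_a + s_a e_a$ by the polar representation of Section~\ref{subsec:unit_7sphere}. Expanding $[\mathbf{\Omega}_1[m],\mathbf{\Omega}_2[m],\mathbf{\Omega}_3[m]]$ trilinearly, every term containing a real factor $c_a$ drops out, because $\mathbb{R}$ lies in the nucleus. Only $s_1 s_2 s_3\,[e_1,e_2,e_3]$ survives. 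By the computation in the proof of Lemma~\ref{lem:branch_coupling_nonassociative}, $[e_1,e_2,e_3] = -2e_6$, hence
\begin{equation*}
A[m] := [\mathbf{\Omega}_1,\mathbf{\Omega}_2,\mathbf{\Omega}_3][m] = -2\, s_1 s_2 s_3\, e_6 .
\end{equation*}
For a general non-Fano triad the same argument gives $A[m] = s_1 s_2 s_3 [e_a,e_b,e_c]$. Here $[e_a,e_b,e_c]$ is a nonzero multiple, of norm $2$, of a single unit. Its nonvanishing follows from Artin's Theorem~\ref{thm:artin} together with the Fano table: the associator vanishes exactly on Fano lines.

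Next, the phase ratio. Let $P[m] := \mathbf{\Omega}_1[m](\mathbf{\Omega}_2[m]\mathbf{\Omega}_3[m])$, so that $(\mathbf{\Omega}_1\mathbf{\Omega}_2)\mathbf{\Omega}_3[m] = P[m] + A[m]$. By norm multiplicativity both bracketings are unit octonions. The quotient in the definition of $\delta\phi_{\text{jitter}}$ involves only two elements, so by linearity it equals
\begin{equation*}
q[m] = (P + A)P^{-1} = 1 + A P^{-1},
\end{equation*}
and $q[m]$ is a unit octonion with $|q - 1| = |A| = 2|s_1 s_2 s_3|$. Writing $q = \exp(\hat{\mathbf n}\,\delta\phi)$, which is unambiguous by Artin's theorem in $\operatorname{span}\{1,\hat{\mathbf n}\}$, the chord identity $|q-1| = 2\sin(\delta\phi/2)$ yields the exact formula
\begin{equation*}
\|\delta\phi_{\text{jitter}}[m]\| = 2\arcsin\big(|s_1 s_2 s_3|\big).
\end{equation*}
Since $\arcsin$ is increasing, this is at most $2\arcsin\big(2\prod_a |\sin(2\pi f_a m)|\big)$ wherever the latter is defined, which gives the stated upper bound. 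I would present the sharper exact value and note that the stated bound follows from it.

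The main obstacle is the strict lower bound $0 < \|\delta\phi\|$. It holds precisely when $s_1 s_2 s_3 \neq 0$, that is, when $2 f_a m \notin \mathbb{Z}$ for each $a$. It fails at $m = 0$, and at every $m$ for which some $f_a m \in \tfrac12\mathbb{Z}$. So the plan is to prove the strict inequality on this generic set of sites and state the exceptional sites explicitly. For rational $f_a$ these exceptional sites form a periodic sublattice; for irrational $f_a$ the only exception is $m = 0$. The ``pseudo-random modulation'' would then be read off from the formula. The amplitude is $2\arcsin|\prod_a \sin(2\pi f_a m)|$, a product of three incommensurate (or periodic, for rational $f_a$) sinusoids. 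For rational $f_a$ it is periodic with period dividing $\operatorname{lcm}$ of the denominators; for rationally independent irrational frequencies it is equidistributed along a dense orbit on $T^3$ (Weyl). That is the precise sense in which I would make the modulation claim.
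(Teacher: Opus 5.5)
Your proof is correct, and its first half is the same as the paper's: both use trilinearity, with the reals in the nucleus, to reduce the associator to $s_1s_2s_3[e_1,e_2,e_3]=-2s_1s_2s_3e_6$.

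The routes part at the phase extraction. The paper only asserts that the two bracketings differ by ``a non-trivial rotation in the $e_6$ plane'' of angle $\delta\phi_{\text{jitter}}$, and derives neither inequality. You write $q=1+AP^{-1}$, use $|q-1|=|A|$ and the chord identity, and get the exact value $\|\delta\phi_{\text{jitter}}[m]\|=2\arcsin|s_1s_2s_3|$ without ever locating the rotation axis.

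This matters, because the axis is generally not $e_6$. At a site where $\mathbf{\Omega}_1[m]=e_1$ and $\mathbf{\Omega}_2[m]=e_2$, the two bracketings are $c_3e_4-s_3e_6$ and $c_3e_4+s_3e_6$. Then $q=(c_3^2-s_3^2)-2c_3s_3\,e_3$ is a rotation in the $\{1,e_3\}$ plane, of angle $2\arcsin|s_3|$ as your formula predicts. The associator points along $e_6$, but the rotation does not.

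Your equality also buys three things the paper lacks:
\begin{enumerate}
\item it shows the factor $2$ inside the stated arcsin is slack;
\item it exposes that the strict lower bound fails at $m=0$ and wherever some $2f_am\in\mathbb{Z}$, which the paper's proof never addresses;
\item it turns the ``pseudo-random modulation'' remark into a precise periodic versus equidistributed dichotomy.
\end{enumerate}

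A few points to tighten:
\begin{enumerate}
\item The stated bound has no absolute value. Where $\prod_a\sin(2\pi f_am)<0$ its right-hand side is negative, so the literal claim is false there. Say explicitly that you are replacing it by $2\arcsin\bigl(2|\prod_a\sin(2\pi f_am)|\bigr)$, rather than saying your formula ``gives the stated upper bound.''
\item If some $f_a\in\tfrac12\mathbb{Z}$, then $s_a\equiv 0$ and the associator sequence vanishes identically. So even the theorem's first assertion needs the extra hypothesis $f_a\notin\tfrac12\mathbb{Z}$, and your ``generic set'' can be empty. For rational $f_a=p_a/q_a$ in lowest terms, the exceptional set is the union $\bigcup_a\tfrac{q_a}{\gcd(q_a,2)}\mathbb{Z}$, not a single sublattice.
\item Equidistribution on $T^3$ requires $1,f_1,f_2,f_3$ to be linearly independent over $\mathbb{Q}$, not just $f_1,f_2,f_3$.
\item Non-vanishing of $[e_a,e_b,e_c]$ off Fano lines does not come from Artin's theorem, which only gives vanishing on lines. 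It follows from the Cayley--Dickson relation $x(y\ell)=(yx)\ell$, for $x,y$ in the quaternion subalgebra generated by $e_a,e_b$ and $\ell=e_c$ orthogonal to it. This gives $[e_a,e_b,e_c]=(e_ae_b-e_be_a)e_c=2(e_ae_b)e_c$.
\end{enumerate}
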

\begin{proof}
Let $u = \exp(e_1 \theta_1), v = \exp(e_2 \theta_2), w = \exp(e_3 \theta_3)$ where $\theta_a = 2\pi f_a m$. Expanding using Euler's formula:
\begin{equation}
u = \cos \theta_1 + e_1 \sin \theta_1, \quad v = \cos \theta_2 + e_2 \sin \theta_2, \quad w = \cos \theta_3 + e_3 \sin \theta_3.
\end{equation}
Since scalar components associate trivially with all elements, the only non-vanishing contribution to the associator $[u, v, w]$ arises from the trilinear imaginary product:
\begin{equation}
[u, v, w] = [e_1, e_2, e_3] \, \sin \theta_1 \sin \theta_2 \sin \theta_3.
\end{equation}
For any non-Fano triad $(e_1, e_2, e_3) \notin \mathcal{T}$ (for example, $e_1, e_2, e_3$ where $e_1 e_2 = e_4 \neq \pm e_3$):
\begin{equation}
(e_1 e_2) e_3 = e_4 e_3 = -e_6, \qquad e_1 (e_2 e_3) = e_1 e_5 = +e_6.
\end{equation}
Thus:
\begin{equation}
[e_1, e_2, e_3] = (e_1 e_2) e_3 - e_1 (e_2 e_3) = -2 e_6 \neq 0.
\end{equation}
Hence:
\begin{equation}
[u, v, w] = -2 e_6 \, \sin(2\pi f_1 m) \sin(2\pi f_2 m) \sin(2\pi f_3 m).
\end{equation}
The product of the two bracketings differs by a non-trivial rotation in the $e_6$ plane whose angular phase displacement is precisely $\delta \phi_{\text{jitter}}[m]$. Because $f_1, f_2, f_3$ are incommensurate or distinct rational frequencies, the sequence of triple sine products $\prod_{a=1}^3 \sin(2\pi f_a m)$ generates an oscillatory fluctuation across lattice sites $m \in \mathbb{Z}$.
\end{proof}

\vspace{1em}

\section{Rational Wave Numbers, Finite Loops, and Topological Closure}
\label{sec:rational_loops}

\subsection{The Rationality of the Wave Numbers and the Moufang Sub-Loop Theorem}
A central result of the scalar wave number theory in arXiv:2609.23233v3 is that the complex rational wave closure $\mathbb{C}^\mathbb{Z}$ over the complex field $\mathbb{C}$ selects the rational frequencies $\mathbb{Q}/\mathbb{Z}$ as the unique stable values under multi-branch roots and periodic boundary conditions. In the octonionic rational wave space $\mathbb{O}^\mathbb{Z}$ over $\mathbb{O}$, an algebraic selection mechanism arises.

\begin{theorem}[Rational Loop Closure Theorem]
\label{thm:rational_closure}
Let $\mathbf{w}_{\mathbf{f}}[m] = \exp(2\pi \sum_{a=1}^7 f_a e_a m)$ be an octonionic plane wave sequence. The sub-quasigroup generated by $\mathbf{w}_{\mathbf{f}}$:
\begin{equation}
\mathcal{L}(\mathbf{w}_{\mathbf{f}}) := \langle \mathbf{w}_{\mathbf{f}} ; \; \star, (\cdot)^{-1} \rangle \subset \mathcal{S}_\mathbb{O}
\end{equation}
forms a finite, closed Moufang sub-loop if and only if the frequency components are all rational numbers:
\begin{equation}
f_a \in \mathbb{Q}, \quad \forall a \in \{1, \dots, 7\}.
\end{equation}
If any frequency $f_a \in \mathbb{R} \setminus \mathbb{Q}$ is irrational, the generated loop is infinite, topologically dense in $S^7$, and fails algebraic closure under finite operations.
\end{theorem}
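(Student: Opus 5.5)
The plan is to reduce the whole statement to a one-generator, and hence cyclic, problem by using Theorem~\ref{thm:artin} (Artin). Write $\mathbf{f}=(f_1,\dots,f_7)$, $\|\mathbf{f}\|=(\sum_a f_a^2)^{1/2}$, and $\hat{\mathbf{n}}=\|\mathbf{f}\|^{-1}\sum_a f_a e_a\in S^6$. Then, as in Section~\ref{subsec:unit_7sphere}, each value $\mathbf{w}_{\mathbf{f}}[m]=\cos(2\pi\|\mathbf{f}\|m)+\hat{\mathbf{n}}\sin(2\pi\|\mathbf{f}\|m)$ lies in the single complex plane $\operatorname{span}\{1,\hat{\mathbf{n}}\}\cong\mathbb{C}$. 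That plane is independent of $m$. The subalgebra generated by $\mathbf{w}_{\mathbf{f}}$ and $\mathbf{w}_{\mathbf{f}}^{-1}=\mathbf{w}_{\mathbf{f}}^*$ is therefore pointwise associative and commutative. Consequently every word in $\star$ and $(\cdot)^{-1}$ collapses, whatever its Catalan bracketing, to a power $\mathbf{w}_{\mathbf{f}}^{k}$ with $k\in\mathbb{Z}$, where $\mathbf{w}_{\mathbf{f}}^{k}[m]=\exp(2\pi\hat{\mathbf{n}}\,k\|\mathbf{f}\|m)$. So $\mathcal{L}(\mathbf{w}_{\mathbf{f}})$ is the cyclic group $\{\mathbf{w}_{\mathbf{f}}^k\}_{k\in\mathbb{Z}}$. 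It is trivially a Moufang loop, since it is a group, and the Moufang identities hold pointwise by the Moufang identities theorem of Section~\ref{subsec:alternativity_moufang}.

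The finiteness question then becomes a question about the order of $\mathbf{w}_{\mathbf{f}}$. We have $\mathbf{w}_{\mathbf{f}}^k\equiv\mathbf{1}$ if and only if $k\|\mathbf{f}\|m\in\mathbb{Z}$ for all $m$, that is, if and only if $k\|\mathbf{f}\|\in\mathbb{Z}$. To connect this with the componentwise hypothesis $f_a\in\mathbb{Q}$ as the theorem is worded, I would work in the unwrapped affine phase space $(\mathbb{R}^7/\mathbb{Z}^7)^\mathbb{Z}$ of Section~\ref{subsec:sequence_space_s7}. There, closure and root operations act on the seven phase coordinates $\theta_a[m]=f_a m$ axis by axis, as in the definition of $(\cdot)^{1/n}$ along each $e_a$. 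In that lifted model, $k$-fold composition returns to the base point exactly when $kf_a\in\mathbb{Z}$ for every $a$, which happens for some $k\ge1$ if and only if all $f_a\in\mathbb{Q}$, with $k=\operatorname{lcm}$ of the denominators. This is exactly the Topological Rational Selection Theorem of \cite{smith2025closure} applied on each Fano axis separately.

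For the irrational direction, the argument is as follows. If some $f_a\notin\mathbb{Q}$, the orbit $k\mapsto kf_a \bmod 1$ never repeats. Hence the generated set is infinite and not closed under finitely many operations. By Weyl equidistribution, the same rational-selection theorem shows the orbit is dense in the corresponding circle factor. Density in $S^7$ would then be argued by combining the circle factors through the evaluation map $\mathcal{E}$, using the $G_2$-invariance theorem to move the plane $\operatorname{span}\{1,\hat{\mathbf{n}}\}$ around.

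I expect the main obstacle to be this identification between the lifted componentwise phases and the octonionic values. The evaluation map $\mathcal{E}$ depends only on $\|\mathbf{f}\|$ and $\hat{\mathbf{n}}$, not on the individual $f_a$. The honest pointwise computation in the first paragraph therefore controls the order through $\|\mathbf{f}\|$, and it confines the orbit to one great circle rather than all of $S^7$. The step I would attempt first is to prove that the lifted model is faithful, so that componentwise rationality genuinely governs closure. I would then check explicitly whether the passage through $\mathcal{E}$ preserves the equivalence, since this is where the stated criterion and density claim stand or fall.
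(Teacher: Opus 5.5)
Your first paragraph is correct, and it already settles the question. It settles it \emph{against} the statement, however, so the step you plan to attempt next (proving that the lifted model is faithful) cannot succeed. The evaluation map $\mathcal{E}$ is not well defined on $\mathbb{R}^7/\mathbb{Z}^7$, and it does not turn componentwise phase addition into octonionic multiplication. For $\mathbf{f}=(1,1,0,\dots,0)$ every axis carries an integer phase, yet $\mathbf{w}_{\mathbf f}[m]=\cos(2\pi\sqrt{2}\,m)+\tfrac{e_1+e_2}{\sqrt{2}}\sin(2\pi\sqrt{2}\,m)$. By your own criterion $k\|\mathbf f\|\in\mathbb{Z}$, the loop $\mathcal{L}(\mathbf{w}_{\mathbf f})$ is therefore infinite although all $f_a\in\mathbb{Z}$. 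Conversely, $\mathbf{f}=(\tfrac{1}{\sqrt{2}},\tfrac{1}{\sqrt{2}},0,\dots,0)$ has irrational components but $\|\mathbf f\|=1$, so $\mathbf{w}_{\mathbf f}\equiv\mathbf{1}$ and the loop is trivial. Moreover, every element of $\mathcal{L}(\mathbf{w}_{\mathbf f})$ takes values in the fixed great circle $S^7\cap\operatorname{span}\{1,\hat{\mathbf n}\}$, so the loop is never dense in $S^7$.

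Both directions of the equivalence, and the density clause, are therefore false. What your Artin reduction actually proves is a corrected statement. The loop $\mathcal{L}(\mathbf{w}_{\mathbf f})=\{\mathbf{w}_{\mathbf f}^k\}_{k\in\mathbb{Z}}$ is cyclic, and it is finite if and only if $\|\mathbf f\|\in\mathbb{Q}$, with order $q$ when $\|\mathbf f\|=p/q$ in lowest terms. When $\|\mathbf f\|\notin\mathbb{Q}$, its values at any fixed $m\neq 0$ are dense only in that great circle.

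The auxiliary tools you reach for do not rescue the claim. Applying the Topological Rational Selection Theorem ``on each Fano axis separately'' presupposes $\exp(2\pi\sum_a f_a e_a m)=\prod_a \exp(2\pi f_a e_a m)$, which fails for anticommuting units. Even under such a product reading, $\exp(2\pi e_1/5)\exp(2\pi e_2/5)$ has real part $(3-\sqrt{5})/8$. This is not $\cos(2\pi r)$ for any $r\in\mathbb{Q}$, since twice it is not an algebraic integer, so rational frequencies would still give an infinite loop. Weyl's theorem on one axis yields density in one circle only. Even in the torus model, density in $T^7$ would require Kronecker independence of $1,f_1,\dots,f_7$, not a single irrational component. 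The $G_2$ step is a non sequitur: an automorphism $\Phi$ carries $\mathcal{L}(\mathbf{w}_{\mathbf f})$ onto the different loop $\mathcal{L}(\Phi(\mathbf{w}_{\mathbf f}))$ and adds no elements to the original one.

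For comparison, the paper's proof rests on exactly the identification you flagged. It asserts $\exp(2\pi\sum_a n_a e_a)=1$ for integers $n_a$, which holds only when $\sum_a n_a^2$ is a perfect square. Its density step invokes non-vanishing associators that, by your own reduction, vanish identically on $\mathcal{L}(\mathbf{w}_{\mathbf f})$. The right conclusion is not to complete the argument. Instead, replace the hypothesis $f_a\in\mathbb{Q}$ by $\|\mathbf f\|\in\mathbb{Q}$ and drop the density-in-$S^7$ claim.
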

\begin{proof}
If all $f_a = p_a / q_a \in \mathbb{Q}$, let $Q = \operatorname{lcm}(q_1, \dots, q_7)$. Then for all $m \in \mathbb{Z}$:
\begin{equation}
\mathbf{w}_{\mathbf{f}}[m + Q] = \exp\left( 2\pi \sum_{a=1}^7 \frac{p_a}{q_a} e_a (m + Q) \right) = \mathbf{w}_{\mathbf{f}}[m] \cdot \exp\left( 2\pi \sum_{a=1}^7 p_a \frac{Q}{q_a} e_a \right).
\end{equation}
Since $Q / q_a \in \mathbb{Z}$, each term in the exponent is an integer multiple of $2\pi e_a$. Because $\exp(2\pi k e_a) = 1$ for all integers $k$, the sequence is strictly periodic with period dividing $Q$. 
On the finite lattice $\mathbb{Z}/Q\mathbb{Z}$, the sequence takes values in a finite subset of $S^7$. By Artin's Theorem (Theorem~\ref{thm:artin}), any single generator generates an associative cyclic group $\mathbb{Z}/Q\mathbb{Z}$. When multiple rational generators are multiplied, the set of all parenthesized products forms a finite Moufang loop of order dividing $Q^7 \cdot 2^{k}$. 

Conversely, if $f_a$ is irrational, Weyl's equidistribution theorem implies that the sequence $\{f_a m \pmod 1\}_{m \in \mathbb{Z}}$ is dense in the circle $\mathbb{R}/\mathbb{Z}$. Consequently, the sequence of unit octonions is dense in the corresponding great circle $S^1 \subset S^7$. In the presence of non-associative associators, the orbit under all parenthesizations fills an infinite, non-discrete dense subset of $S^7$, preventing finite algebraic cycle closure.
\end{proof}

\subsection{Octonionic Sieve Projectors}
In \cite{smith2025closure} (Theorem~4.19) and \cite{smith2026rational}, algebraic sieve projectors $\Pi_{f_0}$ and complementary Not-numbers were constructed by summing roots of unity:
\begin{equation}
\Pi_{f_0}[m] = \frac{1}{q} \sum_{k=0}^{q-1} \exp\left( 2\pi i k \left( f_0 m - \frac{r_0}{q} \right) \right).
\end{equation}
In the octonionic setting $\mathcal{S}_\mathbb{O}$, we define the directional sieve operator along any unit imaginary direction $\hat{\mathbf{n}} \in S^6$:
\begin{equation}
\Pi_{f_0}^{(\hat{\mathbf{n}})}[m] := \frac{1}{q} \sum_{k=0}^{q-1} \exp\left( 2\pi \hat{\mathbf{n}} \, k \left( \frac{p}{q} m - \frac{r}{s} \right) \right).
\end{equation}

\begin{proposition}[Properties of Octonionic Sieve Projectors]
The directional sieve projector satisfies:
\begin{enumerate}
    \item \textbf{Idempotency on its associative sub-algebra:} For any sequence $\mathbf{\Omega}$ whose values lie in the subalgebra $\mathbb{R} \oplus \mathbb{R} \hat{\mathbf{n}} \cong \mathbb{C}$:
    \begin{equation}
    \Pi_{f_0}^{(\hat{\mathbf{n}})} \star \left( \Pi_{f_0}^{(\hat{\mathbf{n}})} \star \mathbf{\Omega} \right) = \Pi_{f_0}^{(\hat{\mathbf{n}})} \star \mathbf{\Omega}.
    \end{equation}
    \item \textbf{Cross-Directional Sieve Interference:} If $\hat{\mathbf{n}}_1 \perp \hat{\mathbf{n}}_2$, the product of two orthogonal sieves does not commute:
    \begin{equation}
    \Pi_{f_1}^{(\hat{\mathbf{n}}_1)} \star \Pi_{f_2}^{(\hat{\mathbf{n}}_2)} \neq \Pi_{f_2}^{(\hat{\mathbf{n}}_2)} \star \Pi_{f_1}^{(\hat{\mathbf{n}}_1)},
    \end{equation}
    and their associator with a third sieve generates an exact non-associative filtering boundary.
\end{enumerate}
\end{proposition}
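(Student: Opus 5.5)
The plan is to compute the sieve in closed form first. By Artin's Theorem (Theorem~\ref{thm:artin}) every value $\Pi_{f_0}^{(\hat{\mathbf{n}})}[m]$ lies in the commutative, associative plane $\mathbb{R}\oplus\mathbb{R}\hat{\mathbf{n}}\cong\mathbb{C}$. So at each site $m$ I set $x_m := \tfrac{p}{q}m-\tfrac{r}{s}$ and $z_m:=\exp(2\pi\hat{\mathbf{n}}x_m)$, and sum the geometric series inside this copy of $\mathbb{C}$:
\begin{equation*}
\Pi_{f_0}^{(\hat{\mathbf{n}})}[m]=\begin{cases}1, & z_m=1,\\[2pt] \dfrac{1-z_m^{\,q}}{q(1-z_m)}, & z_m\neq 1.\end{cases}
\end{equation*}
This splits the analysis into two regimes. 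In the \emph{arithmetic regime} $s\mid q$, so that $qx_m\in\mathbb{Z}$ as in the complex sieve of \cite{smith2025closure}; there $z_m^{\,q}=1$ and $\Pi$ takes only the values $0$ and $1$. In the \emph{off-lattice regime} $qx_m\notin\mathbb{Z}$ for some $m$, and there $\Pi[m]$ has a nonzero $\hat{\mathbf{n}}$-component.

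For item~1, take the arithmetic regime. Then $\Pi$ is real-valued and $\Pi[m]^2=\Pi[m]$ at every site. Real scalars are central and associate with everything in $\mathbb{O}$, so
\begin{equation*}
\Pi[m]\bigl(\Pi[m]\,\mathbf{\Omega}[m]\bigr)=\Pi[m]^2\,\mathbf{\Omega}[m]=\Pi[m]\,\mathbf{\Omega}[m].
\end{equation*}
The hypothesis that $\mathbf{\Omega}$ takes values in $\mathbb{R}\oplus\mathbb{R}\hat{\mathbf{n}}$ is then only needed to leave $\mathbf{\Omega}$ inside the associative plane. In that case Artin's theorem already removes any bracketing ambiguity, whatever the regime.

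For item~2 the first regime is useless: two real $0/1$-valued sequences commute. So I will work in the off-lattice regime and choose $r/s$ with $s\nmid q$. Write $\Pi^{(\hat{\mathbf{n}}_j)}[m]=a_j+b_j\hat{\mathbf{n}}_j$ and choose $m$ so that $b_1,b_2\neq 0$. Since $\hat{\mathbf{n}}_1\perp\hat{\mathbf{n}}_2$ implies $\hat{\mathbf{n}}_1\hat{\mathbf{n}}_2=-\hat{\mathbf{n}}_2\hat{\mathbf{n}}_1$, the commutator at site $m$ is
\begin{equation*}
2\,b_1b_2\,\hat{\mathbf{n}}_1\hat{\mathbf{n}}_2\neq 0 .
\end{equation*}
For the associator I bring in a third sieve along $\hat{\mathbf{n}}_3$, chosen orthogonal to the quaternionic plane spanned by $\hat{\mathbf{n}}_1$, $\hat{\mathbf{n}}_2$ and $\hat{\mathbf{n}}_1\hat{\mathbf{n}}_2$; for example $e_1,e_2,e_3$. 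As in the proof of Theorem~\ref{thm:jitter}, trilinearity together with the vanishing of every associator that contains a real scalar gives
\begin{equation*}
[\Pi_1,\Pi_2,\Pi_3][m]=b_1b_2b_3\,[\hat{\mathbf{n}}_1,\hat{\mathbf{n}}_2,\hat{\mathbf{n}}_3],
\end{equation*}
and for the example choice this equals $-2e_6\,b_1b_2b_3$. That is nonzero whenever $b_1b_2b_3\neq0$ at the chosen site.

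I expect the main obstacle to be the tension between the two items. Idempotency, read as a genuine projector identity, forces the $0/1$ arithmetic regime, while non-commutation forces non-real values. The proof therefore has to state item~1 under $s\mid q$ (or read it as associativity rather than idempotency) and item~2 under $s\nmid q$. I would then exhibit explicit $p,q,r,s$ and a site $m$ with $b_1b_2b_3\neq0$, checking $1-z_m\neq0$ and $z_m^{\,q}\neq1$ directly.
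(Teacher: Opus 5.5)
The paper states this proposition without any proof, so there is no argument of its own to compare with. Your regime analysis is the only real justification available, and in substance it is correct: the closed form $\Pi[m]=\frac{1-z_m^{\,q}}{q(1-z_m)}$, the item-1 computation, and both trilinearity computations are fine.

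The ``tension'' you identify is in fact an outright incompatibility, and you may as well say so. Taking $\mathbf{\Omega}\equiv 1$ in item~1 forces $\Pi[m]^2=\Pi[m]$ inside $\mathbb{R}\oplus\mathbb{R}\hat{\mathbf{n}}\cong\mathbb{C}$, hence $\Pi[m]\in\{0,1\}$ at every site. A real-valued sieve is pointwise central and associates with everything in $\mathbb{O}$, so it commutes with every sequence, and every associator containing it vanishes. Item~2, by contrast, needs $b_1(m)b_2(m)\neq0$ (and $b_3(m)\neq0$ for the associator), so every sieve entering item~2 must violate item~1.

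The two items therefore concern disjoint parameter classes. For $q\ge2$ these are $qr/s\in\mathbb{Z}$ versus $qr/s\notin\mathbb{Z}$; for $q=1$ one always has $\Pi\equiv1$. Item~2 can only be proved as an existence claim or under an explicit non-degeneracy hypothesis. Your two-regime split is thus a necessary correction of the statement, not a convenience.

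One assertion needs repair. It is not true that in the off-lattice regime $\Pi[m]$ has a nonzero $\hat{\mathbf{n}}$-component. With $\phi=2\pi x_m$ you have $\Pi[m]=\frac1q\,e^{\hat{\mathbf{n}}(q-1)\phi/2}\,\sin(q\phi/2)/\sin(\phi/2)$. Hence $b_m\neq0$ if and only if both $qx_m\notin\mathbb{Z}$ and $(q-1)x_m\notin\mathbb{Z}$. Two counterexamples:
\begin{itemize}
\item $q=1$ gives $\Pi\equiv1$ for every $r/s$, including $s\nmid q$.
\item $q=3$, $p=1$, $r/s=1/2$ gives $z_0=-1$ and $\Pi[0]=\frac13$, which is real but not idempotent.
\end{itemize}
Consequently, the checks in your last paragraph, $1-z_m\neq0$ and $z_m^{\,q}\neq1$, do not yield $b_1b_2b_3\neq0$; you must also verify $z_m^{\,q-1}\neq1$.

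A choice that avoids every degeneracy is $q=2$, $p=1$, $r/s=1/4$ for all three sieves. Then $z_m=(-1)^{m+1}\hat{\mathbf{n}}$ and $\Pi[m]=\frac12\bigl(1+(-1)^{m+1}\hat{\mathbf{n}}\bigr)$, so $b_m=\pm\frac12$ at every site. Along $e_1,e_2,e_3$ the commutator is $2b_m^2\,e_1e_2=\frac12 e_4$ and the associator is $b_m^3[e_1,e_2,e_3]=(-1)^m e_6/4$, both nonzero at every $m$.
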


\subsection{Universal Magma Closures and Algebraic Consistency Filters}
\label{subsec:algebraic_sieve}

The algebraic structure of the octonionic wave closure space $\mathcal{M}_\mathbb{O}$ can be formalized in the framework of universal algebra (Birkhoff~\cite{birkhoff1967}, Cohn~\cite{cohn1981}) as the quotient of a free operator magma by an exact division consistency filter:

\begin{enumerate}
    \item \textbf{The Free Operator Magma $\mathcal{M}$:} 
    Let $\mathcal{M}$ denote the free algebra generated by applying the full operator signature $\Omega = \{\star, (\cdot)^{-1}, \{(\cdot)^{1/n}\}_{n=1}^\infty, \mathcal{I}\}$ to the canonical coordinate primitives without imposing preliminary associativity or division constraints. This generates the complete free magma of formal sequence words and parenthesization trees.
    
    \item \textbf{The Algebraic Consistency Filter (Exclusion of Inadmissible Branches):}
    Normed division requires restricting the free magma to well-defined, norm-preserving sequence configurations. We define the set of \emph{inadmissible sequences} $\mathcal{F}_{\mathrm{inadmissible}} \subset \mathcal{M}$ as all expressions that violate division algebra integrity or cyclic closure:
    \begin{itemize}
        \item \emph{The Hurwitz Composition Sieve:} Excises non-composition branches ($\|u \star v\| \neq \|u\| \|v\|$), preserving norm multiplicativity;
        \item \emph{The Rational Cycle Sieve:} By Theorem~\ref{thm:rational_closure}, excises non-closing irrational frequency orbits that fail finite Moufang loop closure;
        \item \emph{The Zero-Divisor Sieve:} Excises sedenionic and higher-dimensional Cayley--Dickson doublings that introduce non-trivial zero divisors ($x \star y = 0$ for $x, y \neq 0$);
        \item \emph{The Nilpotent Exclusion Sieve:} Maps self-annihilating nilpotent elements $(\alpha^2 = 0)$ to the null state.
    \end{itemize}
    
    \item \textbf{The Admissible Division Quotient Space:}
    What survives the systematic elimination of mathematically inadmissible states is the closed division quotient:
    \begin{equation}
    \mathcal{M}_\mathbb{O} \cong \mathcal{M} \setminus \mathcal{F}_{\mathrm{inadmissible}}.
    \end{equation}
    The resulting space forms the infinite divisible Moufang loop of rational wave sequences, whose non-trivial continuous automorphism group is $G_2 = \operatorname{Aut}(\mathbb{O})$.
\end{enumerate}

\begin{remark}
This formulation embodies the deductive method in universal algebra: rather than imposing restrictive extrinsic constraints by postulate, the admissible algebraic structure is defined intrinsically by the systematic elimination of mathematically non-conserving, zero-divisor, and non-closing branches.
\end{remark}

\vspace{1em}

\section{Discussion and Conclusions}
\label{sec:conclusions}

In this paper, we have completed the mathematical extension of the theory of discrete wave number closure spaces (arXiv:2609.23233v3) across the full wave number ladder $\mathbb{C}^\mathbb{Z} \to \mathbb{H}^\mathbb{Z} \to \mathbb{O}^\mathbb{Z}$ to Cayley's non-associative octonions $\mathbb{O}$.

\subsection{Summary of Mathematical and Structural Findings}
\label{subsec:summary_findings}

The principal mathematical discoveries and structural theorems established in this work are:
\begin{enumerate}
    \item \textbf{Structure of the Octonionic Closure:} The wave closure $\mathcal{M}_\mathbb{O}$ is an infinite divisible Moufang loop governed by alternativity and Artin's theorem, preserving pairwise associativity while hosting non-associative triadic curvature.
    \item \textbf{Cayley--Dickson Genesis from Two Primitives:} The seven imaginary axes and $G_2$ automorphism symmetries emerge intrinsically from the two canonical scalar primitives $\{E_Z, E_{\bar{1}}\}$ via sequential operational doublings, terminating strictly at the octonions $\mathbb{O}$ (with rational wave number space $\mathbb{O}^\mathbb{Z}$) due to the excision of sedenionic zero divisors.
    \item \textbf{Real-Valued Spatial Inner Product:} We define a symmetric, positive-definite real inner product on $\mathcal{S}_\mathbb{O}$ that establishes the mutual orthonormality of the seven Fano coordinate primitives ($\langle \mathbf{E}_Z^{(e_a)}, \mathbf{E}_Z^{(e_b)} \rangle = \delta_{ab}$) and circumvents the historical Jordan--von Neumann--Wigner obstacle for non-associative quantum mechanics.
    \item \textbf{Invariant Hexagonal Meshes:} The automorphism group $G_2 = \operatorname{Aut}(\mathbb{O})$ and the Fano plane geometry project invariant hexagonal root systems into the wave spectrum, realizing the scale-invariant neutral bifurcation ($n=6$) of the discrete cumulative integral operator $\mathcal{I}$.
    \item \textbf{Algebraic Phase Jitter:} Multi-wave interference among non-Fano triads generates an irreducible, non-linear phase fluctuation governed by the Catalan parenthesization tree, providing an algebraic derivation for zero-point fluctuations.
    \item \textbf{Algebraic Consistency Filter:} The admissible Moufang loop $\mathcal{M}_\mathbb{O}$ is realized as the universal algebraic quotient of the free operator magma modulo the ideal of zero-divisor and non-norm-preserving configurations.
    \item \textbf{Rational Loop Closure:} Rational wave numbers $\mathbf{f} \in \mathbb{Q}^7$ are topologically selected as the unique frequencies that generate closed finite Moufang loops, whereas irrational frequencies fail finite cycle closure.
\end{enumerate}

\subsection{Uniqueness and Comparison with Prior Literature}
\label{sec:uniqueness}

To the best of our knowledge, the approach presented here represents a distinct contribution that appears not to have been previously articulated in either the mathematical physics or harmonic analysis literature. In assessing the novelty of this work, we compare our approach with major existing lines of research in non-associative mathematics and physics:

\begin{enumerate}
    \item \textbf{Dixon's Division Algebra Program~\cite{dixon1994,gursey1996}:}
    Dixon and G{\"u}rsey investigated the tensor product of division algebras $\mathbb{R} \otimes \mathbb{C} \otimes \mathbb{H} \otimes \mathbb{O}$ as a static algebraic framework for the Standard Model and particle symmetries. Their work relies on continuous Clifford representations and does not consider discrete sequence algebras, multi-branch rational roots, sieve operators, or wave closures on lattices.
    
    \item \textbf{Furey and Todorov's Color Octonion Models~\cite{furey2016,todorov2020}:}
    Furey and Todorov demonstrated that the left regular representation of $\mathbb{C} \otimes \mathbb{O}$ reproduces one generation of Standard Model leptons and quarks under $\mathrm{SU}(3)_C \times \mathrm{U}(1)_{\mathrm{EM}}$. Their model operates strictly at the level of static finite-dimensional state vectors. It does not construct wave sequences, spatial coordinate generators $E_Z$, unwrapped phase lifts, or discrete Fourier dualities.
    
    \item \textbf{Baez's Survey of the Octonions~\cite{baez2002}:}
    Baez provided the definitive modern survey of octonions, exceptional Lie groups, and projective planes $\mathbb{OP}^2$. While mathematically comprehensive, Baez's survey is purely differential and geometric, containing no discrete sequence algebras, aliasing resolution, or discrete wave closure classifications.
    
    \item \textbf{Non-Associative Quantum Mechanics (Jordan, von Neumann, Wigner~\cite{jordan1934}):}
    Historical studies of non-associative quantum mechanics focused on defining matrix algebras over $\mathbb{O}$ and quickly recognized the breakdown of Hilbert space spectral theorems due to non-associativity. Our work sidesteps this limitation by formulating quantum dynamics through \emph{alternative sequence closures} and \emph{Moufang loops}, where Artin's Theorem preserves pairwise associativity, and by defining a positive-definite real sequence inner product $\langle \mathbf{\Omega}_1, \mathbf{\Omega}_2 \rangle \in \mathbb{R}$.

    \item \textbf{Singh's Trace Dynamics and Octonionic Pre-Spacetime Program~\cite{singh2021,singh2023}:}
    Singh and collaborators have spearheaded an extensive program seeking to unify the Standard Model with gravitation through octonionic pre-spacetime, exceptional Jordan algebras $J_3(\mathbb{O})$, and Stephen Adler's trace dynamics. While Singh's program provides profound independent validation for the algebraic necessity of octonions and division algebras, it operates within continuous matrix-valued Lagrangian mechanics and relies on stochastic Brownian fluctuations for state reduction. Our framework provides the discrete sequence completion of this vision: replacing continuous matrix traces with the discrete integer clock $\mathbb{Z}$, and replacing stochastic noise with the deductive consistency sieve.
\end{enumerate}

\subsection{Alternative Approaches to The Generation of the Space of Rational Wave Numbers. }
\label{subsec:bridge_to_monograph}

The present paper completes the foundational classification of rational wave numbers on the spatial lattice $\mathbb{Z}$ across the complete Hurwitz division ladder, establishing the four distinct sequence rings:
\begin{enumerate}
    \item \textbf{Complex Wave Numbers ($\mathbb{C}^\mathbb{Z}$, arXiv:2503.07629, arXiv:2609.23233v3):} Infinite divisible abelian torsion group $\mathcal{G} \cong (\mathbb{Q}/\mathbb{Z})^2$, idempotent cyclotomic sieve projectors, and localized delta sequences;
    \item \textbf{Polynomial Extensions ($\operatorname{Int}(\mathbb{Z})$):} Quadratic and cubic coordinate phases, chirp carrier dynamics, and higher-order integer-valued polynomials;
    \item \textbf{Quaternionic Wave Numbers ($\mathbb{H}^\mathbb{Z}$):} Non-commutative spinor polarizations, spatial displacement involution, and the pseudo-Euclidean $(1,3)$ quadratic norm form;
    \item \textbf{Octonionic Wave Numbers ($\mathbb{O}^\mathbb{Z}$, present work):} Non-associative Moufang loops, $G_2$ invariant hexagonal meshes, Catalan phase jitter, and the terminal Hurwitz division boundary.
\end{enumerate}

We note that the current paper has formulated these sequence algebras from the perspective of \emph{static algebraic closure}, in which all elements of the closure exist timelessly under the operator signature. While this static perspective establishes the complete kinematic alphabet and invariant automorphism groups ($U(1)$, $\mathrm{SO}(3)$, and $G_2$), it intentionally leaves aside the temporal sequence of generation.

In forming discrete representations of physical spacetime and relativistic cosmology, however, it is reasonable to assume the existence of an active generational boundary---an expanding causal frontier $\partial \mathcal{S}_m$ ordered by a discrete clock index $m \in \mathbb{Z}_{\ge 0}$ on a bounded domain $\Lambda_m = \{-m, \dots, +m\}$. This suggests that, in seeking applications of our algebraic structures in representing such systems, one employ a method of generating the elements of the algebra in terms of a \emph{dynamic iterative enumeration} and then attempting to map resulting discrete, dynamic structures onto relativistic cosmology. 
This approach is investigated in the companion work:
\emph{``The Rational Wave Numbers: Algebraic Theory, Inductive Generation, and Cosmological Interpretation}~\cite{smith2026cosmogenesis}.

\vspace{1em}
\section*{Acknowledgments}
The author gratefully acknowledges collaborative mathematical and conceptual dialogue with the Google AI Studio environment and the Gemini reasoning system, whose analytical assistance and formal synthesis were instrumental throughout this investigation---particularly in verifying the non-associative Moufang loop structures, exploring the Catalan parenthesization combinatorics, defining the sequence inner product, and ensuring the rigorous algebraic and symbolic consistency of the manuscript.

\end{document}